\newtheorem{theorem}{Theorem}
\newtheorem{lemma}[theorem]{Lemma}
\newtheorem{assumption}{Assumption}
\newtheorem{definition}{Definition}
\newcommand{\hide}[1]{}
\newcommand{\yue}[1]{\ifthenelse{\boolean{showcomments}}
{ \textcolor{red}{(Yue says:  #1)}}{}}
\newcommand{\josh}[1]{\ifthenelse{\boolean{showcomments}}
{ \textcolor{red}{(Josh says:  #1)}}{}}
\newcommand{\zhenhua}[1]{\ifthenelse{\boolean{showcomments}}
{ \textcolor{red}{(Zhenhua says:  #1)}}{}}
\newcommand{\nhattan}[1]{  \ifthenelse{\boolean{showcomments}}
{ \textcolor{red}{(Tan says:  #1)}} {}  }
\newcommand{\david}[1]{  \ifthenelse{\boolean{showcomments}}
{ \textcolor{red}{(David says:  #1)}} {}  }
\newcommand{\addcites}[0]{\ifthenelse{\boolean{showcomments}}
{ \textcolor{green}{(add citation(s))}}{}}
\newcommand{\addcite}[0]{\ifthenelse{\boolean{showcomments}}
{ \textcolor{green}{(add citation(s))}}{}}
\newcommand{\addref}[0]{\ifthenelse{\boolean{showcomments}}
{ \textcolor{green}{(add ref)}}{}}
\newcommand{\todo}[1]{\ifthenelse{\boolean{showcomments}}
{ \textcolor{red}{(To do:  #1)}}{}}
\newcommand{\revjournal}[1]{\ifthenelse{\boolean{showcomments}}
{\textcolor{black}{#1}}{}}
\newcommand{\fixes}[1]{\ifthenelse{\boolean{showcomments}}
{\textcolor{black}{#1}}{}}
\newcommand{\sversion}[1]{\ifthenelse{\boolean{shortversion}}
	{#1}{}}
\newcommand{\lversion}[1]{\ifthenelse{\boolean{longversion}}
	{#1}{}}
\newcommand{\oproof}[1]{\ifthenelse{\boolean{otherproof}}
	{#1}{}}
\title{
	Geographically Coordinated Primary Frequency Control Technical Report
}
\author{Joshua Comden, Tan N. Le, Yue Zhao, Bong Jun Choi, and Zhenhua Liu*\\ 
	\thanks{J. Comden, T. Le, Y. Zhao, and Z. Liu are with the College of Engineering and Applied Sciences, Stony Brook University, 100 Nicolls Road, Stony Brook, NY 11794, USA. Email: \{joshua.comden, tan.le, yue.zhao.2, zhenhua.liu\}@stonybrook.edu.}
	\thanks{B. Choi is with the School of Computer Science and Engineering \& School of Electronic Engineering, Soongsil University, 369 Sang-doro, Sangdo-dong, Dongjak-gu, Seoul, Korea. Email: davidchoi@soongsil.ac.kr.}
	\thanks{This research is supported by NSF grants CNS-1464388, CNS-1617698, CNS-1730128, CNS-1717558, DGE-1633299, and was partially funded by MSIP, Korea, grant NRF-2019R1C1C1007277. We would like to thank Dr. Changhong Zhao for his valuable and insightful comments on this paper and the reviewers for their suggestions.}
	\thanks{*Corresponding author.}
}
\begin{document}

\maketitle
\thispagestyle{empty}
\pagestyle{empty}


\begin{abstract}
Primary Frequency Control (PFC) is a fast acting mechanism used to ensure high-quality power for the grid that is becoming an increasingly attractive option for load participation.
Due to speed requirement and other considerations,
\revjournal{it is often desirable to have}
\emph{distributed} control laws.
Current
\revjournal{distributed}
PFC designs assume that the costs at each geographic location are \emph{independent}.
However, many networked systems, such as those for cloud computing, 
\revjournal{have \emph{interdependent} costs across locations}
and therefore need geographic coordination.
In this paper, distributed control laws are designed for interdependent, geo-distributed loads in PFC based on the optimality conditions of the global system.
The controlled frequencies are provably stable, and the final equilibrium point is proven to strike an optimal balance between load participation and the frequency's deviation from its nominal set point.
We evaluate the proposed control laws with realistic numerical simulations.
\fixes{Under current technology, the proposed control laws achieve a convergence time that is smaller than droop control alone and is comparable to that of distributed control without interdependent costs.}
Results also highlight significant cost savings over existing approaches under a variety of settings.
\end{abstract}

\section{INTRODUCTION}


In the electric power grid, keeping a stable frequency at a set nominal value is important for supplying reliable high-quality power and maintaining safe grid infrastructure operation.
The frequency can drift away from its set point if there is a power imbalance anywhere in the grid.
To stabilize and return the frequency back to its nominal value, Frequency Control (FC) \cite{shayeghi2009load} is used to correct this power imbalance. 
FC as a whole involves different mechanisms working at a range of timescales. 
In the presence of a sudden power imbalance, e.g., generator failure, Primary Frequency Control (PFC) \cite{molina2011decentralized} is used to stop the drift and stabilize the frequency within tens of seconds. This is typically done by independently controlling each generator's power injection according to a function of its \emph{locally} measured deviated frequency. 
Usually this stabilized equilibrium frequency is deviated from its nominal value, and Automatic Generation Control (AGC) \cite{jaleeli1992understanding} is then used to \emph{centrally} decide and change the set power injections of the generators for bringing frequency back to nominal within a few minutes. 


Traditionally, FC is done on the generation side, while it is starting to become an attractive opportunity for load participation.
Similar to generation-side, demand-side PFC works by setting devices to independently adjust their individual power consumptions according to some function of the locally measured frequency.
However when a device deviates its power consumption, there is an associated loss of utility to the owner of that device.
Finding an optimal balance between the cost of load participation and stabilizing frequency 
is a major challenge of demand-side PFC.


For large-scale, geographically distributed system,
\fixes{a centralized control law would need to collect all the locally measured information at all times.}
Additionally, privacy requirements may not allow a central entity to know the objectives and constraints of all the users.
This is especially true for deregulated power markets and load frequency control when some coordination is needed but local objectives cannot be shared to a central authority~\cite{camponogara2002distributed}.
Therefore, it is often desirable to have distributed control laws, which is the focus of this work. 
It has been shown by \cite{zhao2014optimal} that with well designed control laws, this optimal balance can be made in a fully decentralized fashion and is provably stable as long as these costs are assumed to be \emph{independent} of each other.
Intuitively, this \fixes{above} work leverages locally measured frequency deviations to infer power system status and adjust local power consumptions independently without any communication. 
Recently, this was extended to non-linear power flows with moderate communication \cite{zhao2017distributed}.

\begin{figure}[t]
	\centering
	\includegraphics[width=1.0\linewidth]{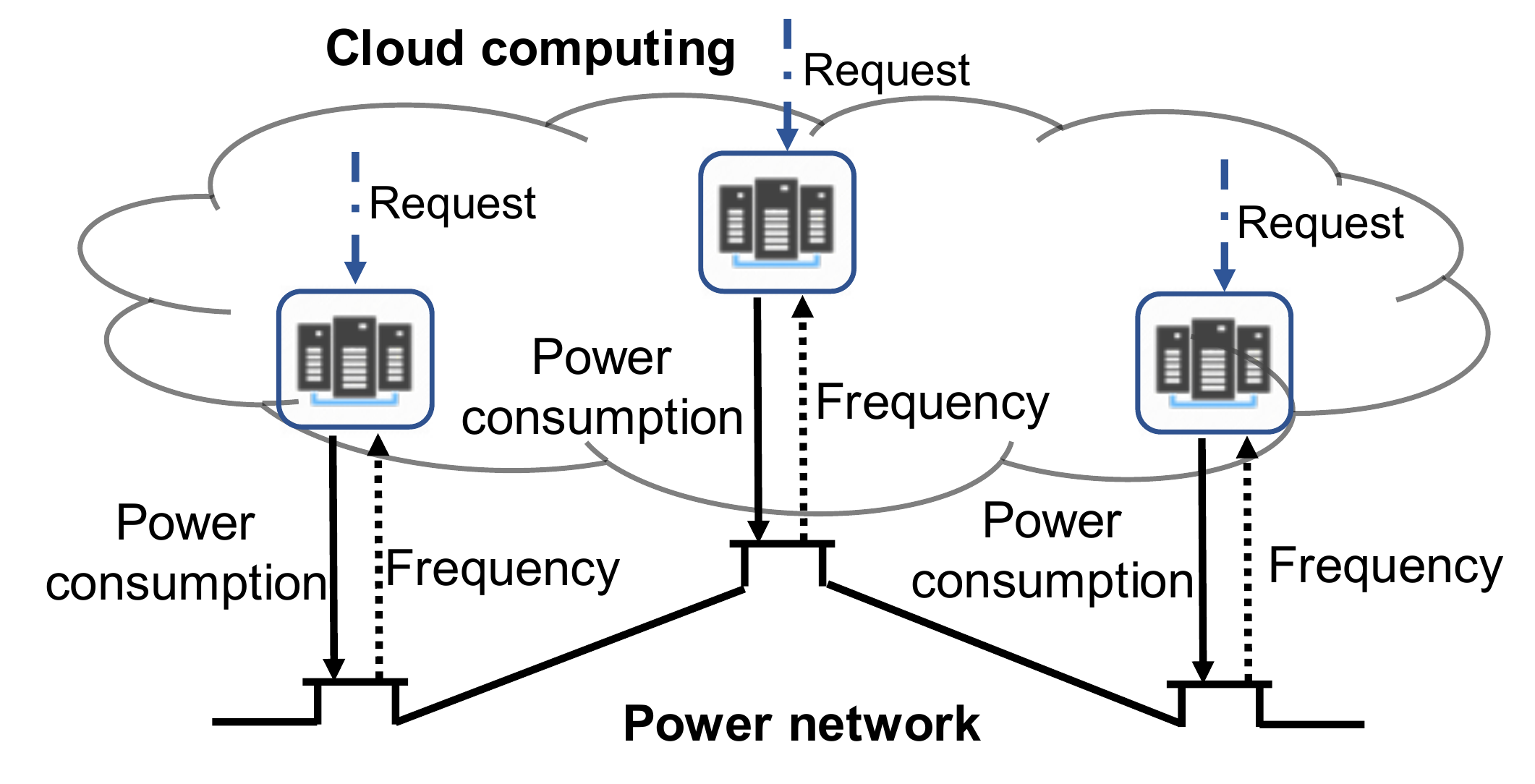}
	\caption{Cloud participation in Primary Frequency Control:
	Multiple datacenters connected to the power network at different locations serve distributed workload requests and adjust their loads according to local frequency measurements.
	The requests may be rerouted to different datacenters by the global load balancer.
	Therefore, the datacenters' power consumptions are not independent.}
	\label{fig:system_model}
	\vspace{-0.05in}
\end{figure}


However, the assumption of independent costs is somehow restrictive. For instance, in some networked systems such as in cloud computing, not all costs can be considered independent.
In general, networks of datacenters make up the infrastructure that supplies the computing resources necessary for making the cloud run.
These datacenters are located around the world and groups of them may be connected to the same electrical power network.
User workloads requesting computing resources are distributed to different datacenters in a way that depends on data availability, server utilization, network delay, etc.
Since servers essentially convert electrical power into computational power, the distribution of IT workloads among the datacenters has a direct impact on the distribution of their power consumptions.
Through Geographic Load Balancing (GLB), networks of datacenters can dynamically redistribute workloads depending on datacenter and power network conditions~\cite{liu2011greening}. 
In other words, some workloads that cannot be processed in one datacenter can be served by other datacenters, which significantly increases system reliability and flexibility in workload distribution. 
However, if some fraction of the workload is not processed by \emph{any} of the datacenters, the whole system is penalized from the resulting loss of revenue \cite{antonescu2013dynamic}.


Since the cloud has been steadily increasing its share of the total US electricity consumption to about 1.8\% in 2014 and has precise energy management of its systems \cite{shehabi2016united}, it has great potential to be a large contributor to PFC.
\revjournal{In fact, a datacenter can control the energy consumption of its servers at the granularity of tens of milliseconds or faster \cite{meisner2009powernap}, and can communicate between other datacenters with only milliseconds of delay \cite{moniz2017blotter}}.
\revjournal{The system's overall architecture is shown in Figure \ref{fig:system_model}.}
For this reason, the paper aims at the following question: \textbf{How to coordinate primary frequency control with geographic interdependent costs in a distributed manner?} While motivated by cloud computing, this question and associated solutions can be applied to general cases with interdependent costs.




We make the following contributions in this paper:
\begin{enumerate}
	\item We formulate a primary frequency control problem that balances the extent of load control participation for a cloud computing network operating at different geographic locations with both independent and \emph{interdependent} costs (Sections \ref{sec:model} and \ref{sec:problem}).
	
	\item We study the frequency control problem's optimal solution characteristics, and design a set of distributed feedback control laws
	\revjournal{inspired by the Subgradient Method from convex optimization}
	that a) has an optimal equilibrium point, and b) is asymptotically stable
	(Sections \ref{sec:optima} and \ref{sec:algorithm}).

	\item \fixes{We demonstrate our control scheme on a realistic emulator, Power System Toolbox (PST)~\cite{chow2009power}, and show that it can both help stabilize the power system faster than droop control alone and achieve an equilibrium frequency that is closer to the nominal value.}
	Furthermore, we show that our distributed control gives significant cost savings as interdependent costs become more prevalent (Section \ref{sec:perf}).
\end{enumerate}

Section \ref{sec:motiv} gives a motivating example to show the impact of interdependent costs on a cloud computing network participating in PFC.
\fixes{Our preliminary work was presented in \cite{comden2017geographically}, whereas in this paper we
incorporate droop control and evaluate the effects of communication time delays which were not present in the previous publication.}
\section{RELATED WORK}




Controlling the frequency of a power system and the economics of its control mechanism has been a major topic of interest for many decades \cite{schweppe1980homeostatic}.
The goal is to both stabilize the frequency and bring it back to the desired set point.
Since there are different types of frequency control categorized by resource response times, various methods of coordination between the resources' control actions have been developed, which include hierarchical control \cite{kiani2012hierarchical}, layered control \cite{ilic2007hierarchical}, and distributed control \cite{dorfler2016breaking}.



Primary frequency control being the fastest, requires the use of governor-type actions that include droop control for stabilizing the frequency even if it is not at the desired nominal value \cite{schweppe1980homeostatic}.
Complex and stochastic models allow more complicated control mechanisms such as droop control with a dead band \cite{chertkov2017chance}.
\lversion{With the increasing penetration of wind power, control of wind turbine speed has potential to be used as an additional PFC resource \cite{margaris2012frequency}.}
This type of control can be made to be decentralized so that it can be implemented with only local feedback \cite{lu1989nonlinear,trip2016internal}.
\lversion{For more operational passive control instead of sudden disturbances, \cite{trip2016internal} designs internal-model controllers to work for time varying loads.}
\fixes{The recent theoretical work of \cite{cherukuri2018role} opens up an avenue for new saddle-point controllers derived from convex optimization problems that have provable asymptotic stability.}


In addition to the generation-side of frequency control, there has been active work towards incorporating demand-side participation to help with both stability and cost-effectiveness of operating the power network \cite{trudnowski2006power}.
\lversion{This includes smart appliances that have the ability to change their power consumption under different frequency measurements.
Simulations have shown considerable potential cost and added stability benefits \cite{lu2006design,short2007stabilization}.}
Datacenters have recently been targeted for demand response participation to help balance supply and demand in the power grid \cite{ghatikar2012demand}.
\cite{zhou2015smart} developed an auction mechanism to incentivize participation in demand response from a geo-distributed cloud provider.
For datacenters that have multiple tenants, \cite{chen2015greening,zhang2015truthful} design a pricing mechanism to extract load reductions during emergency periods for the grid.
\section{MOTIVATING EXAMPLE}
\label{sec:motiv}

In order to demonstrate the importance of taking geographic interdependence into account for PFC design, we showcase a concrete example of a cloud computing workload running on a network of datacenters which consume electrical power from the power grid.


Consider a network of two identical datacenters where the only difference between them is their locations in the power grid and their efficiencies which are the ratios of computational power output to the electrical power input.
The first datacenter is a high efficiency one with an efficiency of 0.9 while the second is an average one with an efficiency of 0.5.
The first is 1.8$\times$ more efficient than the second which is consistent with those studied in~\cite{cheung2014energy,shehabi2016united}.
Therefore, the total computational power of the datacenters is a linear combination of both power consumptions $(d_1,d_2)$ weighted by their efficiencies.
The workload that the datacenters need to share requires 28 MW of computing power as measured by an ideal fully efficient datacenter.
Subtracting the workload size from the total computational power gives the excess computational power of the network $0.9d_1+0.5d_2-28$.
A negative excess computational power means that some of the workload was not processed which results in a loss of revenue shared by all of the datacenters.
\fixes{This cost is typically super linear with respect to the computational power since low value jobs are delayed or dropped first before higher value jobs.
In this example it is represented by the one-sided quadratic cost function $\gamma((28-0.9d_1-0.5d_2)^+)^2$ that features an increasing marginal cost when computational power goes below 28 MW where $\gamma$ is the \emph{interdependent} cost coefficient and $(x)^+:=\max\{0,x\}$.
Each datacenter also observes an \emph{independent} cost for purchasing and processing $d_j$ of electrical power into computational power.
In this example, we use the quadratic function $(d_j-20)^2$ which corresponds to the cost of deviating away from its normal operational set point of 20 MW.
The quadratic cost is a common model for disutility within the electricity markets literature~\cite{allaz1993cournot,cai2013inefficiency,murphy2010impact,yao2007two}.
However, our work in this paper handles more general strictly convex cost functions.
In this example,} the total cost of both datacenters is:
$\fixes{\gamma\left((28-0.9d_1-0.5d_2)^+\right)^2} + (d_1-20)^2 + (d_2-20)^2.$
The solution $d_1=d_2=20$ MW minimizes the total cost with a value of 0 and gives an aggregate electrical power consumption of 40 MW.
Let this be the power consumed when the datacenters are \fixes{operating normally and} not participating in FC.

\begin{figure}[!t]
	\begin{center}
		\subfigure[Interdependent Costs]{{\includegraphics[width=0.45\columnwidth]{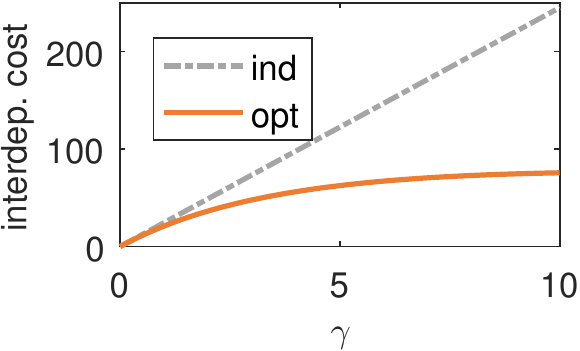}}
			\label{fig:motiv_inter}}
		\subfigure[Independent Costs]{{\includegraphics[width=0.45\columnwidth]{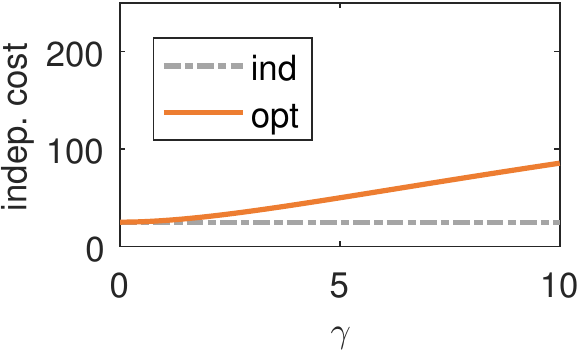}}
			\label{fig:motiv_indep}}
		\vspace{-0.05in}		
		\caption{Interdependent (a) and Independent (b) costs vs. the interdependent cost coefficient $\gamma$ for FC that only considers independent costs (dashed gray line) and of the optimal FC (solid orange line) .}
		\label{fig:motivation}
	\end{center}
	\vspace{-0.15in}
\end{figure}


Suppose that there is a sudden power disturbance in the grid and therefore it is required that the datacenters reduce their aggregate power consumption by 10 MW for FC, i.e. $\Delta d_1+ \Delta d_2=-10$.
If only the independent costs (last two terms of the total cost) are considered for FC, then reducing both power consumptions by 5 MW minimizes the cost.
However, if the interdependent cost (first term of the total cost) is also considered then reducing both power consumptions by 5 MW may be suboptimal as shown in Figure \ref{fig:motivation}.
Figure \ref{fig:motiv_inter} shows that interdependent cost for FC that only minimizes the independent cost increases linearly with the interdependent cost coefficient $\gamma$ while optimal FC increases sublinearly.
This gap is not fully compensated by the difference in independent costs (See Figure \ref{fig:motiv_indep}).
Specifically, when the interdependent cost coefficient $\gamma$ is large, then the total cost for only minimizing independent costs is much higher than that of optimal (e.g. 65\% \fixes{higher total cost} when $\gamma=10$).
Therefore it is important for FC to take into account interdependent costs for networked systems and the rest of this paper focuses on closing this gap. 
As a result, there is great opportunity to reduce the total cost by taking the interdependent costs into the optimization.

\begin{table}
	\caption{Important Model Notations}
	\label{tbl:notation}
	\begin{center}
		\begin{tabular}{| r c l |}
			\hline
			\textbf{Power Network} & $\mathcal{N},\mathcal{E}$ & Set of buses and directed lines \\
			& $\omega_0$ & Nominal frequency \\
			\hdashline[1pt/3pt]
			Bus $j\in\mathcal{N}$: & $\theta_j$ & Voltage phase angle deviation \\
			& $\omega_j$ & Frequency deviation \\
			& $P_j$ & Total power injection \\
			& $F_j(\cdot)$ & \fixes{Net} power outflow \\
			& $D_j$ & Frequency-sensitivity coefficient \\
			& \fixes{$R_j$} & \fixes{Droop control value} \\
			& $p_j$ & Constant power injection \\
			& $M_j$ & Physical inertia \\
			& $|V_j|$ & Voltage magnitude \\
			\hdashline[1pt/3pt]
			Line $(j,k)\in\mathcal{E}$: & $x_{jk}$ & Reactance \\
			& $Y_{jk}$ & Maximum power flow \\
			\hline
			\textbf{Cloud Computing} & $\mathcal{D}$ & Set of datacenters  \\
			& $W$ & Incoming workload rate \\
			& $s$ & Excess processing power \\
			& $g(\cdot)$ & Interdependent cost function \\
			\hdashline[1pt/3pt]
			Datacenter $j\in\mathcal{D}$: & $r_j$ & Processing power \\
			& $d_j$ & Controllable load \\
			& $c_j(\cdot)$ & Independent cost function \\
			& $\underline{d}_j,\overline{d}_j$ & Min/max power usage \\
			& $a_j$ & Computational efficiency \\
			\hline
		\end{tabular}
	\end{center}
\end{table}



\section{MODEL AND NOTATION}
\label{sec:model}

\subsection{Power network model}


We consider a power network consisting of a set of buses $\mathcal{N}$ connected by a set of 
\fixes{directed}
lossless lines $\mathcal{E}$, and only consider the real power injection at each bus and the real power flow across each line.
Important model notation can be found in Table \ref{tbl:notation}.
We ignore reactive power since it is more closely related to voltage control as compared to frequency control \cite{rebours2007survey}.
For each bus $j\in\mathcal{N}$ we denote $P_j$ as the real power injection, $\theta_j$ as the voltage phase angle from a standard reference point rotating at the set nominal frequency $\omega_0$, $\omega_j$ as the frequency's \emph{deviation} from the nominal set point $\omega_0$ or also the time rate of change for the voltage phase angle
\vspace{-0.05in}
\begin{align}
	\omega_j := \frac{d\theta_j}{dt} \quad \forall j\in\mathcal{N}, \label{eq:FreqDev}
\end{align}
and we assume that the voltage magnitude $|V_j|$ remains constant during the time frame of PFC.
For each 
line $(j,k)\in\mathcal{E}$ we denote $x_{jk}$ as the reactance.


The power injection at each bus is split into \fixes{four}
terms
\vspace{-0.05in}
\begin{align}
	P_j := p_j - D_j\omega_j - d_j \fixes{-\frac{1}{R_j}\omega_j} \quad \forall j\in\mathcal{N} \label{eq:pj_def}
\end{align}
where $p_j$ is the frequency-insensitive part of the non-controllable power injection, the second term is the frequency-sensitive part of the non-controllable power injection, $d_j$ is the controllable load, and
\fixes{the last term represents the droop control with $R_j$ as the droop control setting.
If $j\notin\mathcal{D}$, then $d_j=0$.}
We assume that $p_j$ remains constant during the time frame of PFC.
The second term approximates the frequency-sensitivity
\revjournal{from its nominal value}
as a first-order dependence on the frequency's deviation where $D_j>0$ is the linear coefficient.
This is reasonable for small deviations \cite{bergen1981structure}.


The voltage phase angle difference across each line connected to bus $j\in\mathcal{N}$ determines the
\fixes{net}
real power flow out from that bus into the rest of the power network:
\vspace{-0.05in}
\begin{align}
	F_j(\boldsymbol{\theta}):= \sum_{k:(j,k)\in \mathcal{E}}Y_{jk}\sin(\theta_j-\theta_k) \nonumber \\
	- \sum_{i:(i,j)\in \mathcal{E}}Y_{ij}\sin(\theta_i-\theta_j) \label{eq:NetFlow}
\end{align}
where $Y_{jk}:=\frac{|V_j||V_k|}{x_{jk}}$ is the maximum power flow across line $(j,k)\in\mathcal{E}$ as determined by the constant bus voltage magnitudes and line reactance.

\subsection{Cloud computing model}


We consider a set of datacenters $\mathcal{D}\subseteq\mathcal{N}$ connected by a high speed communication network that provides cloud computing services for a workload incoming rate of size $W$.
We assume that the incoming workload rate remains constant for a primary frequency control event time duration.
Each datacenter $j\in\mathcal{D}$ processes some of the incoming workload at a rate of $r_j$.
Subtracting the workload incoming rate from the sum of the datacenter processing rates we get the excess computational power of the network:
\vspace{-0.05in}
\begin{align}
	s := \sum_{j\in\mathcal{D}}r_j - W. \label{eq:s_of_r}
\end{align}
A negative $s$ represents insufficient computational power to process all of the incoming workload which means that $-s$ of the workload may suffer a delay or remain unprocessed.
This causes a loss of revenue captured by the cost function $g(s)$ which means that a specific cost level \emph{depends on all} of the datacenter processing rates.
We assume that each datacenter knows the interdependent cost function $g(\cdot)$ and can receive information about the excess computational power $s$ via the communication network instantaneously.
\revjournal{This assumption is reasonable since PFC is targeted for convergence within tens of seconds while communication between datacenters is on the order of milliseconds \cite{moniz2017blotter}.}


We model each datacenter $j$ as a machine that converts electrical power $d_j$ into computational power $r_j$ with a linear usage profile:
\vspace{-0.1in}
\begin{align}
	d_j := \underline{d}_j + \frac{1}{a_j}r_j \label{eq:dj_defn}
\end{align}
where $\underline{d}_j$ is the constant overhead electrical power usage, and $a_j$ is the conversion coefficient that can be considered the computational efficiency as described in Chapter 5 of \cite{barroso2013datacenter}.
\revjournal{We assume that a datacenter can change its power demand instantaneously since a server can control its power consumption within tens of milliseconds \cite{meisner2009powernap}}. 
Each datacenter has an upper bound on its electrical power consumption $\overline{d}_j$, and observes a cost of $c_j(d_j)$ associated with obtaining and processing the electrical power $d_j$.
We assume that the value of $d_j$ and the function $c_j(\cdot)$ are only known by datacenter $j$.

The excess computational power of the network \eqref{eq:s_of_r} can now be expressed in terms of the individual electrical power consumptions in \eqref{eq:dj_defn}:
\vspace{-0.1in}
\begin{align}
	s = \sum_{j\in\mathcal{D}}a_jd_j - b \label{eq:s_of_djs}
\end{align}
where $b:=W+\sum_{j\in\mathcal{D}}a_j\underline{d}_j$.

\subsection{Power System Dynamics}


The power network frequency dynamics at each bus are determined by the swing equation
\vspace{-0.05in}
\begin{align}
	M_j\frac{d\omega_j}{dt} = P_j - F_j(\boldsymbol{\theta}),  \quad \forall j\in\mathcal{N} \label{eq:Swing_full2}
\end{align}
where $M_j$ is the physical inertia of the rotating equipment.


Since stability is an essential feature of FC, we give the following definition for an equilibrium point of the 
system we study.
\vspace{-0.05in}
\begin{definition}\label{def:eqlbrm}
	A closed-loop equilibrium of the system \eqref{eq:FreqDev} \eqref{eq:pj_def} \eqref{eq:NetFlow} \eqref{eq:s_of_djs} \eqref{eq:Swing_full2}, is any solution $(\boldsymbol{\theta}^*,\boldsymbol{\omega}^*,\mathbf{P}^*,\mathbf{d}^*,s^*)$ that further satisfies:
	\vspace{-0.1in}
	\begin{subequations}
		\begin{align}
			\frac{d\omega_j^*}{dt} & = 0 & \quad \forall j\in\mathcal{N} \label{eq:eqlbrm_omega_constant} \\
			\frac{dP_j^*}{dt} & = 0 & \quad \forall j\in\mathcal{N} \label{eq:eqlbrm_pj_constant}\\
			\omega_j^* & = \omega^* & \quad \forall j\in\mathcal{N}. \label{eq:eqlbrm_eq_omega}
		\end{align}
	\end{subequations}
\end{definition}

Note that \eqref{eq:eqlbrm_omega_constant} makes the LHS of \eqref{eq:Swing_full2} equal to zero and \eqref{eq:eqlbrm_eq_omega} synchronizes all deviations of frequency to a single value.  \eqref{eq:eqlbrm_pj_constant} implies that $d(d_j^*)/dt=0:\forall j\in\mathcal{N}$ and thus $ds^*/dt=0$.
\section{GEOGRAPHIC FREQUENCY CONTROL PROBLEM}
\label{sec:problem}

Before designing control laws for datacenter participation in PFC, we must first decide what an optimal balance is between the controllable datacenter loads and equilibrium deviations of frequency.
We form an optimization problem that minimizes the global cost of the system which is the total cost of the network of datacenters plus the summed cost of the equilibrium deviations of frequency at each bus.


The total cost of the network of datacenters is the interdependent cost of the excess computational power summed with the independent electrical power costs:
\vspace{-0.05in}
\begin{align}
	g(s) + \sum_{j\in\mathcal{D}}c_j(d_j). \nonumber
\end{align}
For the cost of the deviations of frequency, we adopt the cost function developed by \cite{zhao2014design} which is a sum of the squared deviations at each bus weighted by its associated frequency-sensitive linear coefficient.
\fixes{Since we are interested in the cost of the deviations remaining after standard PFC, we include the droop control values with the frequency-sensitive linear coefficients in its cost weightings:}
\vspace{-0.05in}
\begin{align}
	\sum_{j\in\mathcal{N}}\frac{1}{2}\left(D_j\fixes{+\frac{1}{R_j}}\right)\omega_j^2 \nonumber \label{eq:fj_quad}
\end{align}


Given the above total cost of the network of datacenters and equilibrium deviations of frequency, we
\revjournal{give the steady-state}
Geographic Frequency Control (GFC) problem:
\vspace{-0.05in}
\begin{subequations}\label{eq:gfc}
	\begin{align}
		\min_{s,\mathbf{d},\boldsymbol{\omega}} \quad & g(s) + \sum_{j\in\fixes{\mathcal{D}}}c_j(d_j) + \fixes{\sum_{j\in\mathcal{N}}}\frac{1}{2}\left(D_j\fixes{+\frac{1}{R_j}}\right)\omega_j^2 \\
		\text{s.t.} \quad & \sum_{j\in\fixes{\mathcal{D}}}a_jd_j - b = s \label{eq:gfc_sequal} \\
		& \sum_{j\in\mathcal{N}} \left(p_j - D_j\omega_j - d_j \fixes{-\frac{1}{R_j}\omega_j}\right) = 0 \label{eq:gfc_power} \\
		& \underline{d}_j\leq d_j \leq \overline{d}_j \quad \forall j\in\fixes{\mathcal{D}} \label{eq:gfc_bnds_dj}
	\end{align}
\end{subequations}
where \eqref{eq:gfc_sequal} is the excess computational power \eqref{eq:s_of_djs}, \eqref{eq:gfc_power} is the balance of electrical power on the grid, and \eqref{eq:gfc_bnds_dj} are the box constraints on the datacenter electrical power consumptions.


In order to take advantage of the structure of GFC \eqref{eq:gfc} we use the following mild assumptions.
\vspace{-0.05in}
\begin{assumption}\label{as:convexity}
	$g(s)$ is strictly convex and twice continuously differentiable. For all $j\in\mathcal{N}$: $c_j(d_j)$ is strictly convex and twice continuously differentiable for all $d_j\in[\underline{d}_j,\overline{d}_j]$.
\end{assumption}

\begin{assumption}\label{as:feas}
	GFC \eqref{eq:gfc} has a feasible solution, and for any optimal solution there exists a feasible $\boldsymbol{\theta}$ such that:
	\vspace{-0.05in}
	\begin{align}
		F_j(\boldsymbol{\theta}) = p_j-D_j\omega_j-d_j \fixes{-\frac{1}{R_j}\omega_j}, \quad \forall j\in\mathcal{N}.
	\end{align}
\end{assumption}

Convex cost functions are found in geographic load balancing optimization problems \cite{liu2011greening} and are consistent with concave disutility functions used in demand response programs \cite{bitar12,jiang11,li2011optimal}.
Assumption \ref{as:feas} ensures that for any optimal solution of power injections, there exists a set of voltage phase angles that satisfy the solution.


With the above assumptions we now show that GFC is a convex optimization problem.
\begin{lemma}\label{thm:gfc_convex}
	Given Assumption \ref{as:convexity}, then GFC \eqref{eq:gfc} is a convex optimization problem and has a unique solution.
\end{lemma}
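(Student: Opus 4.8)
The plan is to verify the two standard ingredients for ``convex optimization problem with a unique solution'': (i) the feasible set of \eqref{eq:gfc} is convex, and (ii) the objective is convex, and in fact strictly convex on the feasible set, which then yields uniqueness. First I would observe that the decision variables are $(s,\mathbf{d},\boldsymbol{\omega})$ and that every constraint is \emph{affine} in these variables: \eqref{eq:gfc_sequal} is a single linear equality $\sum_{j\in\mathcal{D}}a_jd_j - b = s$; \eqref{eq:gfc_power} is a single linear equality $\sum_{j\in\mathcal{N}}\bigl(p_j - (D_j+\tfrac{1}{R_j})\omega_j - d_j\bigr)=0$ (note the droop term simply rescales the coefficient of $\omega_j$, so linearity is preserved); and \eqref{eq:gfc_bnds_dj} are box constraints $\underline{d}_j\le d_j\le\overline{d}_j$. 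An intersection of hyperplanes and a box is a (closed) convex polyhedron, so the feasible set is convex, and it is nonempty by Assumption \ref{as:feas}.

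Next I would argue convexity of the objective $g(s) + \sum_{j\in\mathcal{D}}c_j(d_j) + \sum_{j\in\mathcal{N}}\tfrac{1}{2}(D_j+\tfrac{1}{R_j})\omega_j^2$. It is a sum of three groups of terms, each in disjoint blocks of variables: $g$ depends only on $s$ and is strictly convex by Assumption \ref{as:convexity}; each $c_j$ depends only on $d_j$ and is strictly convex on $[\underline{d}_j,\overline{d}_j]$ by Assumption \ref{as:convexity}; and each frequency term $\tfrac12(D_j+\tfrac1{R_j})\omega_j^2$ is a strictly convex quadratic since $D_j>0$ and $R_j>0$ make the coefficient positive. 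A sum of convex functions is convex, so the objective is convex and GFC \eqref{eq:gfc} is a convex program.

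For uniqueness, the subtlety is that the objective is \emph{not} strictly convex jointly in $(s,\mathbf{d},\boldsymbol{\omega})$ as written, because $s$ and $\mathbf{d}$ are linked by the equality \eqref{eq:gfc_sequal}. The clean way around this is to use \eqref{eq:gfc_sequal} to eliminate $s$: substituting $s=\sum_{j\in\mathcal{D}}a_jd_j-b$ produces an equivalent problem in $(\mathbf{d},\boldsymbol{\omega})$ whose objective is $g\bigl(\sum_j a_jd_j-b\bigr)+\sum_{j\in\mathcal{D}}c_j(d_j)+\sum_{j\in\mathcal{N}}\tfrac12(D_j+\tfrac1{R_j})\omega_j^2$. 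Here $\sum_{j\in\mathcal{D}}c_j(d_j)$ is already strictly convex in $\mathbf{d}$ (sum of strictly convex functions of separate coordinates), the composition $g(\sum_j a_jd_j-b)$ is convex, and $\sum_{j\in\mathcal{N}}\tfrac12(D_j+\tfrac1{R_j})\omega_j^2$ is strictly convex in $\boldsymbol{\omega}$; hence the reduced objective is strictly convex in $(\mathbf{d},\boldsymbol{\omega})$ over the (still convex, nonempty) reduced feasible set, so its minimizer $(\mathbf{d}^*,\boldsymbol{\omega}^*)$ is unique, and then $s^*=\sum_{j\in\mathcal{D}}a_jd_j^*-b$ is determined uniquely as well. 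I expect this elimination-of-$s$ step to be the only real subtlety; everything else is a routine check of affineness of constraints and convexity of a separable-plus-composition objective.
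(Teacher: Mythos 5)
Your proof is correct and reaches the same conclusion as the paper's (very terse) argument: affine equalities plus box constraints give a convex feasible set, and strict convexity of the objective gives uniqueness. The one place you diverge is your claim that the objective ``is not strictly convex jointly in $(s,\mathbf{d},\boldsymbol{\omega})$ as written, because $s$ and $\mathbf{d}$ are linked by the equality.'' That concern is misplaced: the objective is a separable sum in which every coordinate ($s$, each $d_j$, each $\omega_j$) appears in exactly one term, and each such term is strictly convex in its own variable ($g$ and $c_j$ by Assumption \ref{as:convexity}, and $\tfrac12(D_j+\tfrac1{R_j})\omega_j^2$ since $D_j>0$, $R_j>0$). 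A sum of strictly convex functions of disjoint blocks covering all coordinates is strictly convex jointly --- the equality constraint restricts the feasible set but does not alter convexity properties of the objective function itself, and a strictly convex function has at most one minimizer over any convex set. This is exactly what the paper's proof asserts. Your elimination of $s$ is therefore unnecessary, but it is not wrong: the reduced objective $g(\sum_j a_jd_j-b)+\sum_j c_j(d_j)+\sum_j\tfrac12(D_j+\tfrac1{R_j})\omega_j^2$ is indeed strictly convex in $(\mathbf{d},\boldsymbol{\omega})$ because the $c_j$ and $\omega_j$ terms already cover every remaining coordinate strictly, so your route also yields uniqueness. One small point neither you nor the paper addresses: strict convexity gives at most one minimizer, while existence requires an argument (here, the $d_j$ live in compact boxes and the objective is coercive in $\boldsymbol{\omega}$, so a minimizer over the nonempty feasible set exists); it would be worth a sentence.
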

\vspace{-0.1in}
\begin{proof}
	From Assumption \ref{as:convexity}, the objective function is strictly convex which means it has a unique minimizer.  Additionally, the equality constraints \eqref{eq:gfc_sequal} \eqref{eq:gfc_power} are linear and the inequality constraints \eqref{eq:gfc_bnds_dj} are convex which gives the result.
\end{proof}
\section{CHARACTERIZING THE OPTIMA}
\label{sec:optima}


We now provide characterizations of the optimal solution of GFC. This will then motivate the design of decentralized algorithm later on. 
From Assumption \ref{as:feas} and Lemma \ref{thm:gfc_convex}, the Karush-Kuhn-Tucker (KKT) conditions for optimality for GFC \eqref{eq:gfc} are applicable and can be determined with the dual variables $(\mu,\lambda,\underline{\boldsymbol{\kappa}},\overline{\boldsymbol{\kappa}})$ for each constraint respectively:
\begin{subequations}\label{eq:gfc_kkt}
	\begin{align}
		g'(s) - \mu = 0 & \label{eq:gfc_kkt_stat1} \\
		c_j'(d_j) + \mu a_j - \lambda - \underline{\kappa}_j + \overline{\kappa}_j = 0 & \quad \forall j\in\fixes{\mathcal{D}} \label{eq:gfc_kkt_stat2} \\
		\left(D_j \fixes{+\frac{1}{R_j}} \right)\omega_j-\lambda \left(D_j  \fixes{+\frac{1}{R_j}} \right) = 0 & \quad \forall j\in\mathcal{N} \label{eq:gfc_kkt_stat3} \\
		\underline{\kappa}_j(\underline{d}_j-d_j) = 0 & \quad \forall j\in\fixes{\mathcal{D}} \label{eq:gfc_kkt_cs1} \\
		\overline{\kappa}_j(d_j-\overline{d}_j) = 0 & \quad \forall j\in\fixes{\mathcal{D}} \label{eq:gfc_kkt_cs2} \\
		\underline{\kappa}_j \geq 0 & \quad \forall j\in\fixes{\mathcal{D}} \label{eq:gfc_kkt_dual1} \\
		\overline{\kappa}_j \geq 0 & \quad \forall j\in\fixes{\mathcal{D}} \label{eq:gfc_kkt_dual2} \\
		\sum_{j\in\fixes{\mathcal{D}}}a_jd_j - b - s = 0 \label{eq:gfc_kkt_prim1} \\
		\sum_{j\in\mathcal{N}} \left(p_j  - D_j\omega_j - d_j \fixes{-\frac{1}{R_j}\omega_j}\right) = 0 & \label{eq:gfc_kkt_prim2} \\
		\underline{d}_j-d_j \leq 0 & \quad \forall j\in\fixes{\mathcal{D}} \label{eq:gfc_kkt_prim3} \\
		d_j-\overline{d}_j \leq 0 & \quad \forall j\in\fixes{\mathcal{D}} \label{eq:gfc_kkt_prim4}
	\end{align}
\end{subequations}
where \eqref{eq:gfc_kkt_stat1} \eqref{eq:gfc_kkt_stat2} \eqref{eq:gfc_kkt_stat3} are the first-order stationary conditions, \eqref{eq:gfc_kkt_cs1} \eqref{eq:gfc_kkt_cs2} are the complementary slackness conditions, \eqref{eq:gfc_kkt_dual1} \eqref{eq:gfc_kkt_dual2} are the dual feasibility conditions, and \eqref{eq:gfc_kkt_prim1} \eqref{eq:gfc_kkt_prim2} \eqref{eq:gfc_kkt_prim3} \eqref{eq:gfc_kkt_prim3} are the primal feasibility conditions.
Note that the operator $(\cdot)'$ denotes the derivative.


From the above KKT conditions we can infer the following properties of the optimal solution:
\begin{enumerate}
	\item From \eqref{eq:gfc_kkt_stat3} we have that
	\vspace{-0.05in}
	\begin{align}
		\omega_j = \lambda & \quad \forall j\in\mathcal{N} \label{eq:lambda}
	\end{align}
	which means that the deviation of frequency at each bus is equal to a single value, the same as the equilibrium condition \eqref{eq:eqlbrm_eq_omega}.
	\revjournal{Note that $\lambda$ is the dual variable of \eqref{eq:gfc_power}, which means that the optimal frequency deviation equals the marginal cost for the network of datacenters to provide this steady-state load adjustment.}
	\item From \eqref{eq:gfc_kkt_stat1},
	\revjournal{we have that $\mu$ is equal to the marginal interdependent cost with respect to the excess computational power of the network:
		\vspace{-0.1in}
	\begin{align}
		\mu = g'(s)
	\end{align}
	Together with}
	Assumption \ref{as:convexity}, we have that at equilibrium the excess computational power $s$ is equal to:
	\vspace{-0.05in}
	\begin{align}
		s = (g')^{-1}(\mu) \label{eq:opt_prop2}
	\end{align}
	which is an increasing function of $\mu$ due to the strict convexity property.
	Note that the operator $(\cdot)^{-1}$ denotes the inverse.
	\item If $\underline{d}_j<d_j<\overline{d}_j$ then from \eqref{eq:gfc_kkt_cs1} \eqref{eq:gfc_kkt_cs2} we have that $\underline{\kappa}_j=\overline{\kappa}_j=0$.
	With \eqref{eq:gfc_kkt_stat2} \eqref{eq:gfc_kkt_stat3} we have that $\omega_j$ is equal to the marginal independent cost and its share of the marginal interdependent cost:
	\vspace{-0.1in}
	\begin{align}
		\omega_j= c'_j(d_j) + a_j\mu
	\end{align}
	The $a_j$ converts the marginal interdependent cost from being with respect to the excess computational power of the network into being with respect to load $j$.
	This indicates that even though all datacenters at equilibrium experience the same frequency deviation, they may not have the same marginal independent cost due to different burdens of the marginal interdependent cost.
	By applying \eqref{eq:lambda}, we have
	\vspace{-0.1in} 
	\begin{align}
		\lambda= c'_j(d_j) + a_j\mu
	\end{align}	
	Together with Assumption \ref{as:convexity}, we have that
	\vspace{-0.05in}
	\begin{align}
		d_j = (c'_j)^{-1}(\omega_j-a_j\mu) \label{eq:opt_prop3}
	\end{align}
	which is an increasing function of $(\omega_j-a_j\mu)$ due to the strict convexity property.
\end{enumerate}
\section{DISTRIBUTED FREQUENCY CONTROL}
\label{sec:algorithm}
In this section, we first state the distributed control laws and then prove their optimality and stability in solving GFC.

\subsection{Control Laws}


At time $t$, let $\omega_j(t)$ be the local frequency deviation measured by datacenter $j$, and let $s(t)$ be the excess computational power of the network measured by the cloud provider's workload balancer.
We propose the following control law of demand response $d_j(t)$ at each $j\in\mathcal{D}$, with an auxiliary variable $\mu (t)$ being broadcast by the cloud provider:
\vspace{-0.05in}
\begin{subequations}\label{eq:control}
	\begin{align}
		d_j(t) & = 	\left[(c'_j)^{-1}(\omega_j(t)-a_j\mu(t))\right]_{\underline{d}_j}^{\overline{d}_j} \quad \forall j\in\fixes{\mathcal{D}} \label{eq:control_dj} \\
		\mu(t) & = \mu(0) + \beta\int_{0}^{t}\left(s(\tau)-(g')^{-1}(\mu(\tau))\right)d\tau. & \label{eq:control_mu}
	\end{align}
\end{subequations}
The constant $\beta>0$ is a control parameter that determines the sensitivity that the auxiliary variable $\mu(t)$ is to the mismatch between the measured value of the excess computational power $s(t)$ and the value according to Equation \eqref{eq:opt_prop2}.
Note that $(g')^{-1}(\cdot)$ and $(c'_j)^{-1}(\cdot)$ are well defined because of the strict convexity assumption stated in Assumption \ref{as:convexity}.


\revjournal{The control laws were inspired by the Subgradient Method which was developed for solving 
convex optimization problems with non-differentiable objective functions (See \cite{bertsekas1999nonlinear} Chapter 6).}
Essentially, the control laws work by first
\revjournal{applying the measured frequency deviation $\omega_j(t)$ and the received auxiliary variable $\mu(t)$ to Equation \eqref{eq:opt_prop3} as if the system were already at equilibrium and implementing $d_j(t)$ as such.}
It then gradually
\revjournal{moves the auxiliary variable $\mu(t)$ in the direction of the difference between the LHS and RHS of Equation \eqref{eq:opt_prop2} to push the system towards equilibrium.
This is analogous to the Subgradient Method being applied in continuous time where the subgradient is measured by $(s(\tau)-(g')^{-1}(\mu(\tau))$ and $\beta$ is equivalent to the step size in discrete time.}

Note that by using $\mu(t)$ in \eqref{eq:control_dj} we assume that the communication delay is negligible, which is a mild assumption in many scenarios as the inter-datacenter delay is at the millisecond level \cite{moniz2017blotter}.
\fixes{However, we also evaluate the cases with significant delay in Section \ref{sec:perf_sensitivity}.}

In order to better qualify an equilibrium point, we further define it to include the auxiliary variable $\mu$ when the optimal property \eqref{eq:opt_prop2} is satisfied and therefore is not changing with time.
The following definition will be useful when proving stability in Section \ref{sec:stability}.
\begin{definition}\label{def:eqlbrm2}
		A closed-loop equilibrium of the system \eqref{eq:FreqDev} \eqref{eq:pj_def} \eqref{eq:NetFlow} \eqref{eq:s_of_djs} \eqref{eq:Swing_full2} \eqref{eq:control_mu}, is any solution $(\boldsymbol{\theta}^*,\boldsymbol{\omega}^*,\mathbf{P}^*,\mathbf{d}^*,s^*,\mu^*)$ that satisfies Definition \ref{def:eqlbrm} and
		\vspace{-0.05in}
		\begin{align}
			\frac{d\mu^*}{dt}=0 \label{eq:eqlbrm2_mu}.
		\end{align}
\end{definition}

\subsection{Optimality}


\revjournal{At equilibrium, Equation \eqref{eq:opt_prop2} becomes satisfied and so the auxiliary variable $\mu(t)$ remains constant according to control law \eqref{eq:control_mu}.
Also, Equation \eqref{eq:opt_prop3} was always satisfied because of control law \eqref{eq:control_dj}.}
The following theorem
\revjournal{further explains this point and}
states that an equilibrium point of the above distributed control laws is optimal to the GFC optimization problem \eqref{eq:gfc}.

\begin{theorem}\label{thm:optimality}
	Given Assumptions \ref{as:convexity} and \ref{as:feas}, an equilibrium point from Definition \ref{def:eqlbrm2} of the system \eqref{eq:FreqDev} \eqref{eq:pj_def} \eqref{eq:NetFlow} \eqref{eq:s_of_djs} \eqref{eq:Swing_full2} with control laws \eqref{eq:control} is an optimal solution of GFC \eqref{eq:gfc}.
\end{theorem}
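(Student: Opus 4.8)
The plan is to verify that the equilibrium point, paired with an explicit choice of multipliers, satisfies the KKT system \eqref{eq:gfc_kkt}; since GFC \eqref{eq:gfc} is a convex program with affine constraints (Lemma \ref{thm:gfc_convex}), these conditions are sufficient for optimality, so exhibiting such a primal-dual pair shows that $(s^*,\mathbf{d}^*,\boldsymbol{\omega}^*)$ solves GFC. The multipliers can be read off directly: take $\mu$ in \eqref{eq:gfc_kkt} to be the equilibrium value $\mu^*$ of the auxiliary variable and $\lambda$ to be the common equilibrium frequency deviation $\omega^*$ of \eqref{eq:eqlbrm_eq_omega}. Then \eqref{eq:eqlbrm2_mu} applied to the integrator \eqref{eq:control_mu} forces $s^*=(g')^{-1}(\mu^*)$, i.e. $g'(s^*)-\mu^*=0$, giving \eqref{eq:gfc_kkt_stat1}; and \eqref{eq:gfc_kkt_stat3} holds since $\omega_j^*=\omega^*=\lambda$ for every $j\in\mathcal{N}$.

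Next I would treat the control law \eqref{eq:control_dj} with a case split on whether the projection onto $[\underline{d}_j,\overline{d}_j]$ is active. In the interior case $\underline{d}_j<d_j^*<\overline{d}_j$ set $\underline{\kappa}_j=\overline{\kappa}_j=0$; then \eqref{eq:control_dj} gives $c_j'(d_j^*)=\omega_j^*-a_j\mu^*$ and substitution into \eqref{eq:gfc_kkt_stat2}, using $\lambda=\omega^*=\omega_j^*$, makes its left-hand side vanish. If $d_j^*=\underline{d}_j$, monotonicity of $(c_j')^{-1}$ from Assumption \ref{as:convexity} yields $c_j'(\underline{d}_j)\ge\omega_j^*-a_j\mu^*$, so I would take $\overline{\kappa}_j=0$ and $\underline{\kappa}_j=c_j'(\underline{d}_j)+a_j\mu^*-\omega^*\ge0$, which simultaneously satisfies stationarity \eqref{eq:gfc_kkt_stat2} and dual feasibility \eqref{eq:gfc_kkt_dual1}; the case $d_j^*=\overline{d}_j$ is symmetric with $\underline{\kappa}_j=0$ and $\overline{\kappa}_j=\omega^*-a_j\mu^*-c_j'(\overline{d}_j)\ge0$. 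In every case the complementary slackness conditions \eqref{eq:gfc_kkt_cs1}--\eqref{eq:gfc_kkt_cs2} hold because the nonzero multiplier always multiplies a zero slack, and the box feasibility \eqref{eq:gfc_kkt_prim3}--\eqref{eq:gfc_kkt_prim4} is immediate because $d_j^*$ is a projection onto $[\underline{d}_j,\overline{d}_j]$.

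Finally I would check the two equality-constraint conditions. Condition \eqref{eq:gfc_kkt_prim1} is the defining identity \eqref{eq:s_of_djs}, which holds along every trajectory. For \eqref{eq:gfc_kkt_prim2}, use that \eqref{eq:eqlbrm_omega_constant} turns the swing equation \eqref{eq:Swing_full2} into $P_j^*=F_j(\boldsymbol{\theta}^*)$ for all $j$; summing over $\mathcal{N}$ and observing that each directed line contributes $Y_{jk}\sin(\theta_j^*-\theta_k^*)$ with opposite signs at its two endpoints gives $\sum_{j\in\mathcal{N}}F_j(\boldsymbol{\theta}^*)=0$, hence $\sum_{j\in\mathcal{N}}P_j^*=0$, which is exactly \eqref{eq:gfc_kkt_prim2} after expanding \eqref{eq:pj_def}. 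With all of \eqref{eq:gfc_kkt} verified, convexity of GFC completes the proof. The main obstacle I anticipate is the boundary bookkeeping in the second step: the uncapped formula \eqref{eq:opt_prop3} only applies verbatim when the projection is inactive, so one must check carefully that the saturated cases still correspond to a genuine KKT multiplier (right sign, right value, consistent complementary slackness) rather than merely a feasible point.
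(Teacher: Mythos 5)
Your proposal is correct and follows essentially the same route as the paper's proof: verify the KKT system \eqref{eq:gfc_kkt} at the equilibrium with $\mu=\mu^*$, $\lambda=\omega^*$, a three-way case split on the projection in \eqref{eq:control_dj} to construct $\underline{\kappa}_j,\overline{\kappa}_j$ (the paper packages your case-by-case multipliers into the single formulas $\underline{\kappa}_j=[c_j'(\underline{d}_j)-(\omega_j^*-a_j\mu^*)]^+$ and $\overline{\kappa}_j=[(\omega_j^*-a_j\mu^*)-c_j'(\overline{d}_j)]^+$, which coincide with yours), and the telescoping of $\sum_{j\in\mathcal{N}}F_j(\boldsymbol{\theta}^*)=0$ for \eqref{eq:gfc_kkt_prim2}. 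The boundary bookkeeping you flag as the main obstacle is handled exactly as you describe, so no gap remains.
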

\vspace{-0.05in}
\lversion{Please see Appendix \ref{sec:appendix_opt_proof} for the proof of the above theorem.}

\sversion{Please see the Technical Report~\cite{gcfc2019tech} for the proof of the above theorem.}

This is important because it shows that when the system reaches steady state, there is a guaranteed optimal balance between datacenter PFC load participation and the system wide deviated frequency that is equal to the deviated frequencies at every bus.
Also at steady state, the marginal independent cost for each datacenter is the equilibrium deviation of frequency discounted by its marginal contribution to the interdependent cost (See \eqref{eq:opt_prop2}, \eqref{eq:opt_prop3}).
This means that a datacenter with a large marginal contribution to the interdependent cost results in a low marginal independent cost as compared to the other datacenters.

\subsection{Stability}
\label{sec:stability}


To prove that the system is asymptotically stable with the distributed control laws, we give the following assumption which is found to be true under normal operating conditions \cite{bergen1981structure}.

\begin{assumption}\label{as:line_angles}
	The equilibrium phase angle deviations between connected buses are bounded: $|\theta^*_i-\theta^*_j|<\frac{\pi}{2}$ for all $(i,j)\in\mathcal{E}$.
\end{assumption}


We use the Lypanunov method in the following theorem to prove that an equilibrium point, within the neighborhood described in the above assumption, is asymptotically stable.
Additionally, we show that the system asymptotically converges to an equilibrium point that ignores the specific phase angles.
This is important because it guarantees that the system trajectory is always moving towards an equilibrium point.

\begin{theorem}\label{thm:stability}
	Given Assumptions \ref{as:convexity} and \ref{as:feas}, an equilibrium point (Definition \ref{def:eqlbrm2}) of the system \eqref{eq:FreqDev} \eqref{eq:pj_def} \eqref{eq:NetFlow} \eqref{eq:s_of_djs} \eqref{eq:Swing_full2} with feedback control \eqref{eq:control} is \fixes{locally asymptotically stable within the neighborhood around $\boldsymbol{\theta}^*$ if the equilibrium satisfies Assumption \ref{as:line_angles}.}
	In particular under the same assumptions, the trajectory of $(\boldsymbol{\omega},\mathbf{P},\mathbf{d},s,\mu)$ such that $|\theta_i-\theta_j|<\frac{\pi}{2}:\forall(i,j)\in\mathcal{E}$ will asymptotically converge to an equilibrium point $(\boldsymbol{\omega}^*,\mathbf{P}^*,\mathbf{d}^*,s^*,\mu^*)$.
\end{theorem}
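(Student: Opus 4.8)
The plan is to construct a Lyapunov function $U$ for the closed-loop system $(\boldsymbol{\theta},\boldsymbol{\omega},\mu)$ — with $\mathbf{d}$ and $s$ treated as algebraic functions of $(\boldsymbol{\omega},\mu)$ via \eqref{eq:control_dj} and \eqref{eq:s_of_djs} — and to invoke LaSalle's invariance principle. A natural candidate combines three pieces: a kinetic-energy term $\sum_{j\in\mathcal{N}}\frac{1}{2}M_j(\omega_j-\omega^*)^2$ capturing the inertia in the swing equation \eqref{eq:Swing_full2}; a potential-energy term built from the line flows, of the form $\sum_{(j,k)\in\mathcal{E}}Y_{jk}\bigl(\cos(\theta_j^*-\theta_k^*)-\cos(\theta_j-\theta_k)-\sin(\theta_j^*-\theta_k^*)(\theta_j-\theta_k-\theta_j^*+\theta_k^*)\bigr)$, which is a standard Bregman-type distance that is locally positive definite in the phase differences precisely under Assumption \ref{as:line_angles} ($|\theta_i^*-\theta_j^*|<\pi/2$); and an auxiliary-variable term $\frac{1}{2\beta}(\mu-\mu^*)^2$ to handle the integrator dynamics \eqref{eq:control_mu}. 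Each term is nonnegative and vanishes only at the equilibrium (up to the usual shift-invariance of phase angles, which is why the theorem only claims convergence of $(\boldsymbol{\omega},\mathbf{P},\mathbf{d},s,\mu)$, not of $\boldsymbol{\theta}$ itself).

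First I would differentiate $U$ along trajectories. The kinetic term contributes $\sum_j (\omega_j-\omega^*)M_j\dot\omega_j = \sum_j(\omega_j-\omega^*)(P_j-F_j(\boldsymbol{\theta}))$; the potential term contributes $\sum_j \dot\theta_j\bigl(F_j(\boldsymbol{\theta})-F_j(\boldsymbol{\theta}^*)\bigr) = \sum_j \omega_j\bigl(F_j(\boldsymbol{\theta})-F_j(\boldsymbol{\theta}^*)\bigr)$, using \eqref{eq:FreqDev} and the gradient structure of the flow potential; and the auxiliary term contributes $\frac{1}{\beta}(\mu-\mu^*)\dot\mu = (\mu-\mu^*)\bigl(s-(g')^{-1}(\mu)\bigr)$. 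Adding these and substituting $P_j$ from \eqref{eq:pj_def}, the $F_j(\boldsymbol{\theta})$ cross-terms should cancel, leaving $\dot U$ as a sum of (i) a quadratic damping term $-\sum_{j\in\mathcal{N}}(D_j+\tfrac{1}{R_j})(\omega_j-\omega^*)(\omega_j-\omega^*)$ arising from the frequency-sensitive load and droop, plus terms involving $\omega^*$ and the equilibrium power balance that vanish by \eqref{eq:gfc_kkt_prim2}/\eqref{eq:eqlbrm_eq_omega}; (ii) terms coupling $(\omega_j-\omega^*)$ with $-(d_j-d_j^*)$; and (iii) the $\mu$ term. The key algebraic step is to show that the cross terms involving $\mathbf{d}$ and $\mu$ combine into something manifestly nonpositive. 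Here I would use the control law \eqref{eq:control_dj}: since $d_j$ is the (box-projected) inverse of the increasing map $c_j'$ evaluated at $\omega_j-a_j\mu$, and $d_j^*=(c_j')^{-1}(\omega^*-a_j\mu^*)$ satisfies the same relation at equilibrium, monotonicity of $(c_j')^{-1}$ gives $(d_j-d_j^*)\bigl((\omega_j-a_j\mu)-(\omega^*-a_j\mu^*)\bigr)\ge 0$; likewise strict convexity of $g$ makes $(g')^{-1}$ increasing so that $(\mu-\mu^*)\bigl((g')^{-1}(\mu)-(g')^{-1}(\mu^*)\bigr)\ge 0$, i.e. $-(\mu-\mu^*)(g')^{-1}(\mu)\le -(\mu-\mu^*)s^*$ after using $s^*=(g')^{-1}(\mu^*)$. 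Combining with the constraint $s-s^*=\sum_j a_j(d_j-d_j^*)$ from \eqref{eq:s_of_djs}, the $a_j\mu$ pieces telescope, and one is left with $\dot U\le -\sum_{j\in\mathcal{N}}(D_j+\tfrac{1}{R_j})(\omega_j-\omega^*)^2 - \sum_{j\in\mathcal{D}}(d_j-d_j^*)(\omega_j-\omega^*) \le 0$ — actually the middle term needs a bit more care and may need to be absorbed using $\omega_j\to\omega^*$ synchronization, but the upshot is $\dot U\le 0$ with equality forcing $\omega_j=\omega^*$ for all $j$.

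Then I would apply LaSalle: on the largest invariant set where $\dot U=0$ we have $\boldsymbol{\omega}\equiv\boldsymbol{\omega}^*$ constant, hence $\dot\omega_j=0$, hence by \eqref{eq:Swing_full2} $P_j=F_j(\boldsymbol{\theta})$ is constant; constancy of $\omega_j$ also forces $d_j$ to depend only on $\mu$, and feeding back into the swing equation plus the integrator dynamics pins down $\mu$ constant and then $s$ constant, so the invariant set consists exactly of equilibria in the sense of Definition \ref{def:eqlbrm2}. Restricting to a sublevel set of $U$ inside the neighborhood $\{|\theta_i-\theta_j|<\pi/2\}$ (which is forward-invariant because $\dot U\le 0$ and $U$ is radially unbounded there in the relevant coordinates), LaSalle yields convergence of $(\boldsymbol{\omega},\mathbf{P},\mathbf{d},s,\mu)$ to an equilibrium, and local asymptotic stability of any such equilibrium satisfying Assumption \ref{as:line_angles}. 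The main obstacle I anticipate is item (ii): showing the $(d_j-d_j^*)(\omega_j-\omega^*)$ cross terms are harmless. Two standard fixes are available — either augment $U$ with an integral-of-cost term $\sum_j\int_{d_j^*}^{d_j}(c_j'(\xi)-c_j'(d_j^*))\,d\xi$ and/or a term in $s$ so that these cross terms are generated with the right sign, or exploit that at equilibrium $\omega^*=\lambda=c_j'(d_j)+a_j\mu$ on the interior so the coupling is exactly the monotonicity pairing above; I would adopt whichever makes the cancellation cleanest, and a secondary technical point is handling the box projection in \eqref{eq:control_dj}, where on active constraints $d_j-d_j^*$ and the corresponding argument difference still have consistent signs so the inequality survives.
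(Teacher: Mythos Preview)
Your proposal is essentially identical to the paper's proof: the same three-part Lyapunov function (the paper writes the potential term as $\mathcal{V}_3=\sum_{(i,j)\in\mathcal{E}}\int_{\theta^*_{ij}}^{\theta_{ij}}Y_{ij}(\sin u-\sin\theta^*_{ij})\,du$, which equals your Bregman form), the same derivative computation, and the same monotonicity argument to conclude $\dot{\mathcal{V}}\le 0$ with equality only when $\boldsymbol{\omega}=\boldsymbol{\omega}^*$ and $\mu=\mu^*$. On the obstacle you flag, no augmentation of $U$ is needed and your ``second fix'' is exactly what the paper does: the $(\mu-\mu^*)(s-s^*)$ piece of $\dot{\mathcal{V}}_1$ expands via $s-s^*=\sum_{j}a_j(d_j-d_j^*)$ and combines with the $-(\omega_j-\omega_j^*)(d_j-d_j^*)$ cross terms from $\dot{\mathcal{V}}_2+\dot{\mathcal{V}}_3$ to give the clean monotonicity pairing $-\sum_{j}\bigl((\omega_j-a_j\mu)-(\omega_j^*-a_j\mu^*)\bigr)(d_j-d_j^*)\le 0$, with the box projection handled by the sign-consistency argument you already sketched.
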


\lversion{Please see Appendix \ref{sec:appendix_stability_proof} for the proof of the above theorem.}

\sversion{Please see the Technical Report~\cite{gcfc2019tech} for the proof of the above theorem.}
\section{PERFORMANCE EVALUATION}
\label{sec:perf}

\fixes{We simulate a 400 MW generation loss at the 5 second time stamp and show that the proposed feedback control with existing droop control stabilizes the system to an equilibrium point closer to the nominal frequency than droop control alone.
We compare the proposed control with a decentralized load control OLC~\cite{zhao2014optimal} and show that our proposed control has significant participation cost savings for the network of data centers.}

\subsection{Setup}
In order to demonstrate the performance of the proposed control on a more realistic system model than described in Section \ref{sec:model}, we use the Power System Toolbox (PST)~\cite{chow2009power} to simulate it.

\begin{figure}
	\centering
	\includegraphics[width=0.9\linewidth]{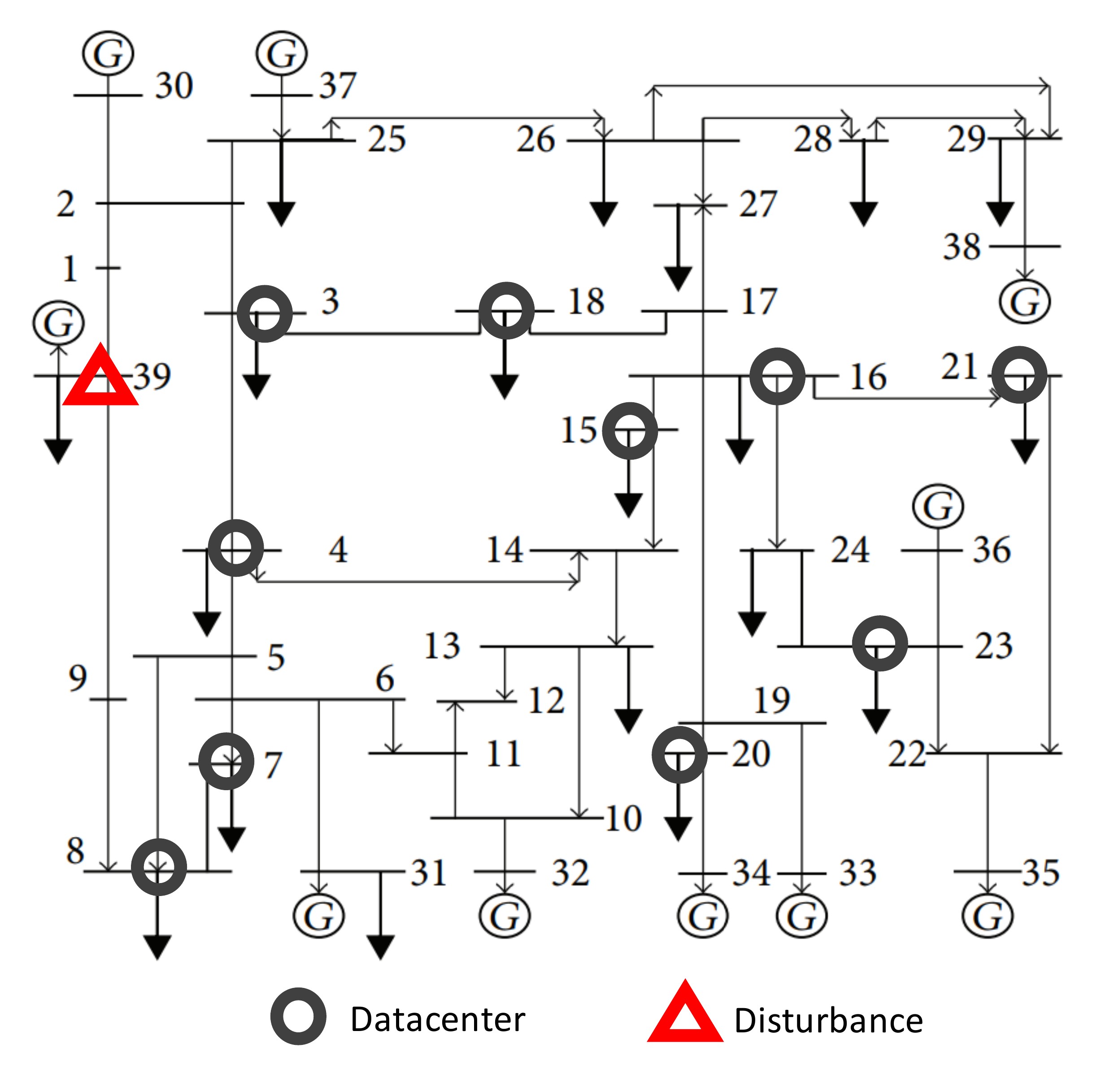}
	\vspace{-0.1in}
	\caption{The network of 10 datacenters on IEEE 39-bus test system \cite{zhang2015Optimized}.
	The total power demand is 14 GW, and the total datacenter power demand is 250 MW.}
	\label{fig:39bus}
\end{figure}

\paragraph*{Power network}
The IEEE 39-bus (New England) system was chosen as the test case for the evaluation which has 19 buses available to place controllable loads.
The total power demand is set at 14 GW~\cite{isone2016energy}.
We use the following ten buses to place controllable datacenter loads: 3, 4, 7, 8, 15, 16, 18, 20, 21, and 23.
\fixes{Additionally, the generator buses 30-39 all have droop control settings of $R_j=0.05$ which is a typical setting~\cite{undrill2018primary}.}
To simulate a power disturbance, at the 5 second time stamp power drops \fixes{400 MW} from a generator at bus 39.
\revjournal{Figure \ref{fig:39bus} depicts the power network setup}.

\paragraph*{Network of datacenters}
The datacenters each has a nominal demand of 25 MW which in total is 1.8\% of the total power demand~\cite{shehabi2016united}.  Each datacenter has minimum and maximum demands of 15 MW and 30 MW respectively.
Each datacenter's efficiency $a_j$ is estimated based on the fact that typical datacenters have a Power Usage Effectiveness (PUE) range between 1.1 and 2.1 with an average of 1.8~\cite{cheung2014energy,shehabi2016united}, therefore we randomly select $1/a_j\in[1.1,2.1]$ with $\mathbb{E}[1/a_j]=1.8$.

\paragraph*{Costs}
The objective function used in analyzing the cost for control is defined as follows:
\vspace{-0.05in}
\begin{align}
	\gamma\frac{1}{2}\left(\fixes{\left(-\sum_{j\in\mathcal{D}}a_j\delta_j\right)^+}\right)^2 + \sum_{j\in\mathcal{D}}\frac{\eta_j}{2}\delta_j^2 + \alpha\sum_{j\in\mathcal{N}}\frac{D_j\fixes{+\frac{1}{R_j}}}{2}\omega_j^2. \label{eq:obj_sim}
\end{align}
where \fixes{$(x)^+:=\max\{0,x\}$ and} $\delta_j:=d_j-25$ which means that each datacenter has a nominal demand of 25 MW.
The first term is the interdependent cost, the second term is the independent costs, and the third term is the cost associated with the deviations of frequency.
The interdependent cost is approximated by the fact that the Amazon Web Service's revenue of 1.3 million servers \cite{aws_capacity} is \$2.5 billion for the first quarter in 2016 \cite{aws_revenue}.
This gives a cost of \$2.47 per second for each 10k servers.
For our system, this translates into maximum of estimated interdependent cost for each second is $\$123.50$. Hence assuming an average PUE of 1.8 and only half of a 100 MW power equivalent workload is processed, we set $\gamma=\$0.16/\text{MW}^2$ throughout the evaluation.
The dependent cost for each datacenter includes its wasted cost for under/over-utilizing the pre-purchased electricity, operation, and maintenance.
Based on the Total Cost of Ownership in \cite{barroso2013datacenter}, under utilizing a datacenter by 50\% can cost $\$0.28$ per second for each 10k servers.  For the average datacenter in our system with 50k servers this is $\$1.40$ which we equate to a 5 MW decrease of the total available 10 MW decrease.
Hence, we randomly choose $\eta_j$ such that $\mathbb{E}[\eta_j]=\$0.11/\text{MW}^2$.
The cost for deviated frequency has been valued at \$15/MW for a 0.2 Hz deviation~\cite{nationalgrild2016enhanced,lin2016energy}.
Since $\sum_{j\in\mathcal{N}}\frac{D_j\fixes{+\frac{1}{R_j}}}{2}\omega_j$ is the aggregate frequency-sensitive load \fixes{with droop control}, we set $\alpha$ to \$75/MW-Hz.

\paragraph*{Baselines}
To show the benefit of incorporating interdependent load costs, we compare the proposed control to Optimal Load Control (OLC) described in \cite{zhao2014optimal} which is a decentralized control that does not take into account interdependent costs.
\fixes{In the stability analysis, we also compare it to using droop control only without load participation.}
In the equilibrium cost analysis, we also compare it to the case called ``lower bound" which is an estimated offline optimal solution to GFC \eqref{eq:gfc} with $D_j=0:\forall j\in\mathcal{N}$.

\begin{figure*}[!ht]
	\begin{center}
		\subfigure[Frequency]{{\includegraphics[width=0.66\columnwidth]{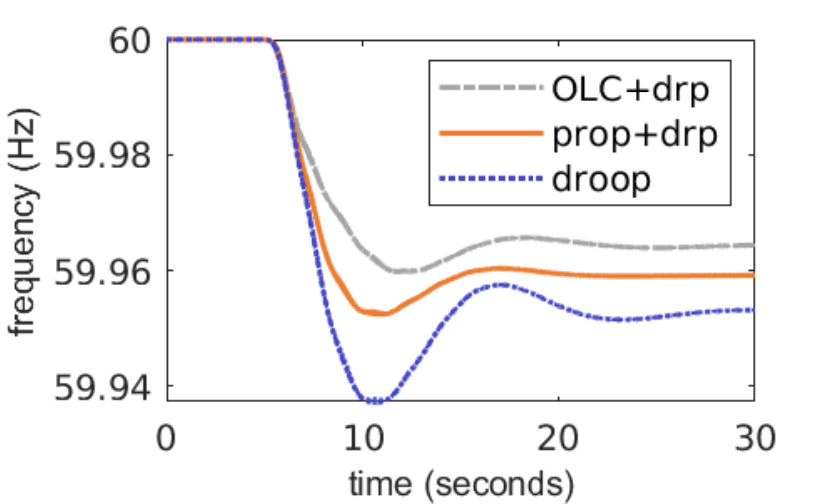}}
			\label{fig:load_freq}} \hspace{-0.1in}		
		\subfigure[Power demand change, $\delta_j$]{{\includegraphics[width=0.6\columnwidth]{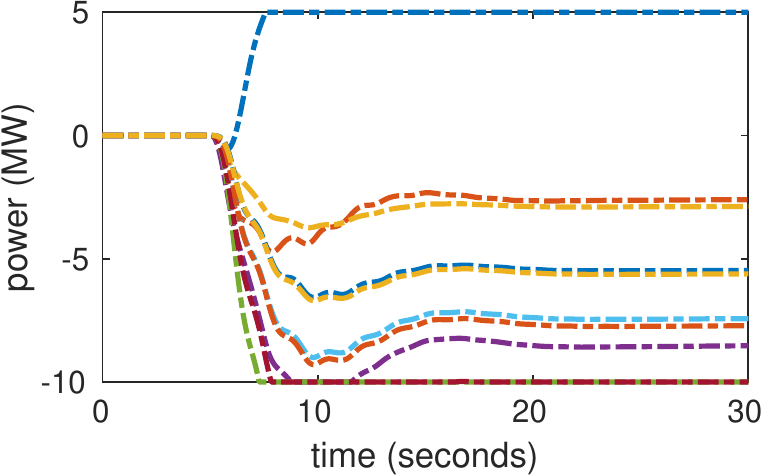}}
			\label{fig:load}}
		\subfigure[Cost change]{{\includegraphics[width=0.6\columnwidth]{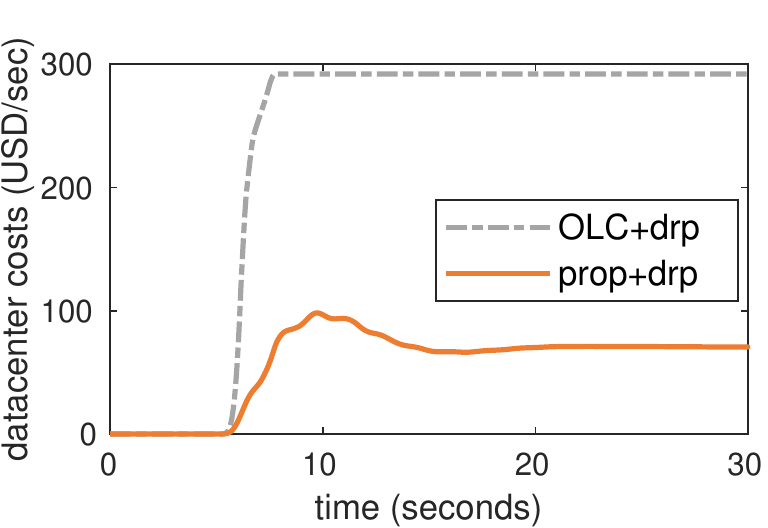}}
			\label{fig:cost_time}}
		\vspace{-0.05in}					
		\fixes{\caption{Trajectories of the system state variables: (a) Bus frequencies for each of the ten buses containing a datacenter under the three different control schemes; (b) Changes in load for each of the ten datacenters under the proposed control; (c) Total cost changes for the network of datacenters under OLC and the proposed control.}}
	\end{center}
	\vspace{-0.05in}
\end{figure*}

\begin{figure*}[!ht]
	\begin{center}
		\subfigure[Total cost]{{\includegraphics[width=0.63\columnwidth]{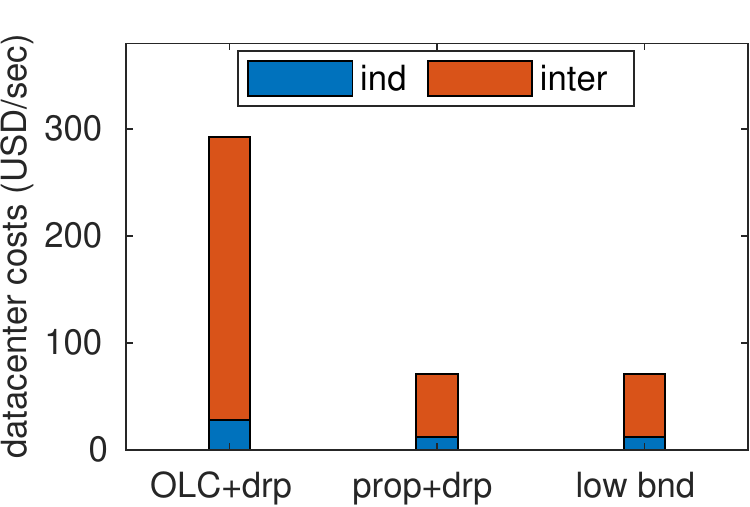}}
			\label{fig:breakdown_cost}}					
		\subfigure[Independent costs]{{\includegraphics[width=0.62\columnwidth]{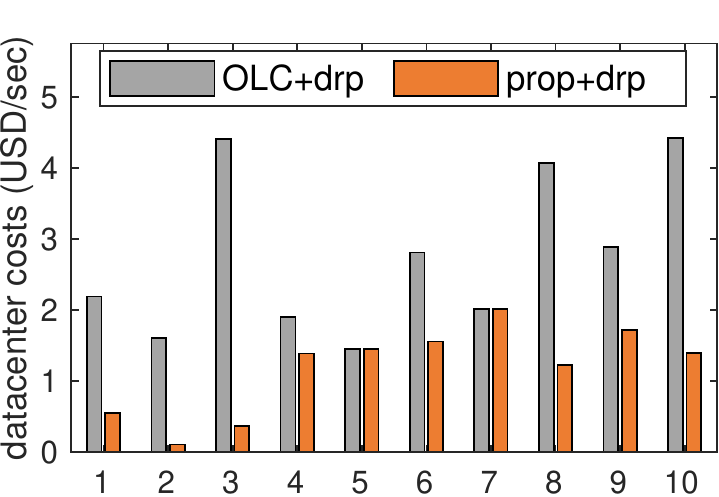}}
			\label{fig:cost}}
		\subfigure[Load deviations]{{\includegraphics[width=0.65\columnwidth]{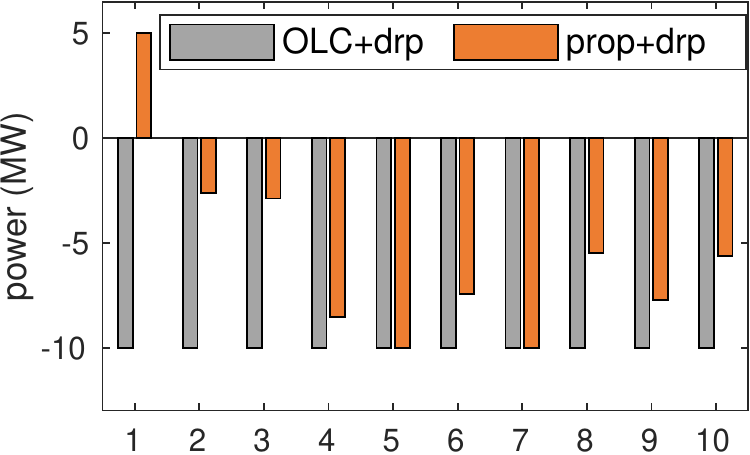}}
			\label{fig:load_deviation}}
		\vspace{-0.05in}	
		\fixes{\caption{(a) Total equilibrium cost separated by independent and interdependent costs of OLC, the proposed control, and the lower bound. For each datacenter (b) and (c) give the independent costs and equilibrium load deviations respectively under OLC and the proposed control.  Note: The datacenters are numbered in decreasing order of efficiency $a_j$.}}
	\end{center}
	\vspace{-0.05in}
\end{figure*}

\begin{figure*}[!ht]
	\begin{center}
		\subfigure[Impact of communication delay]{{\includegraphics[width=0.65\columnwidth]{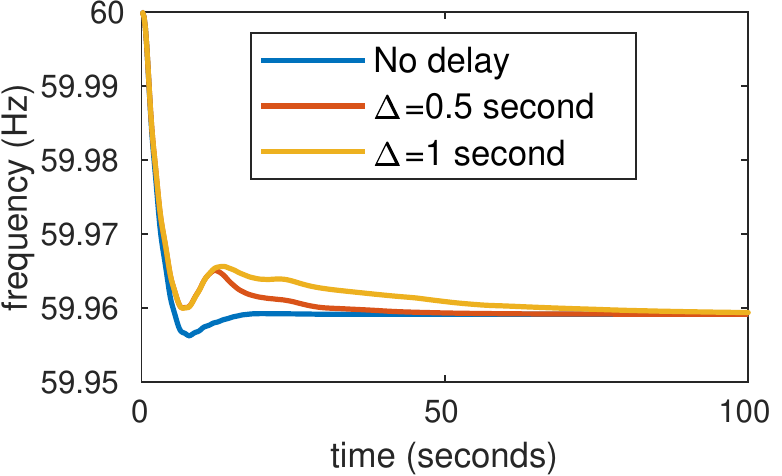}}
			\label{fig:delay_freq}}
		\subfigure[Equilibrium Frequency Deviation ]{{\includegraphics[width=0.65\columnwidth]{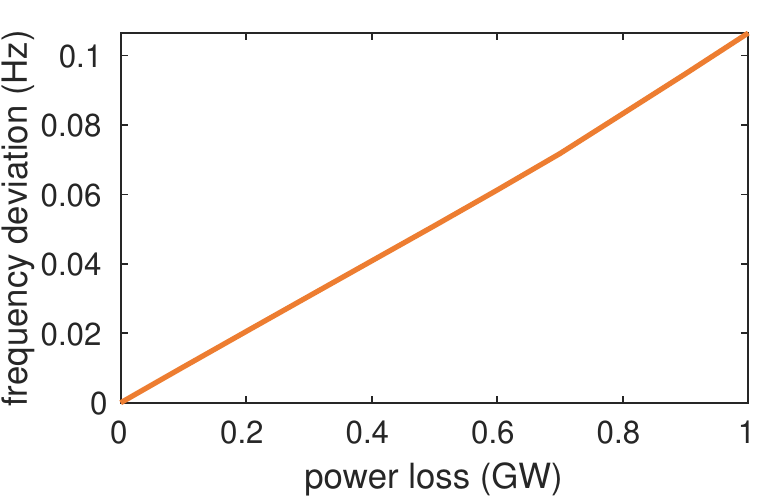}}
			\label{fig:power_freq_dev}}
		\subfigure[Total Cost]{{\includegraphics[width=0.65\columnwidth]{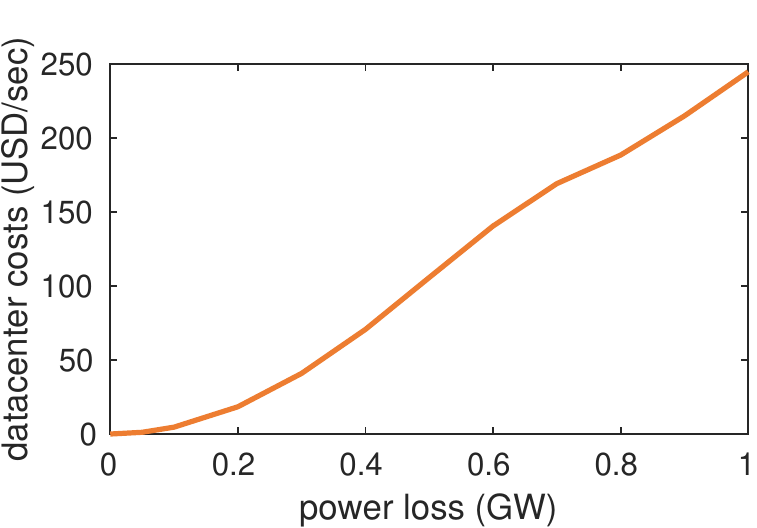}}
			\label{fig:power_costs}}
		\vspace{-0.05in}							
		\fixes{\caption{(a) Stability analysis with the presence of communication delay $\Delta$ and setting control actions every 0.1 seconds. Sensitivity analysis on disturbance size with respect to (a) Equilibrium frequency deviation and (b) Cost to the network of datacenters.}}
		\label{fig:sensitivity_analysis2}
	\end{center}
	\vspace{-0.05in}
\end{figure*}

\lversion{
\begin{figure*}[!ht]
	\begin{center}
		\subfigure[Impact of $\gamma$ (interdependent cost)]{{\includegraphics[width=0.59\columnwidth]{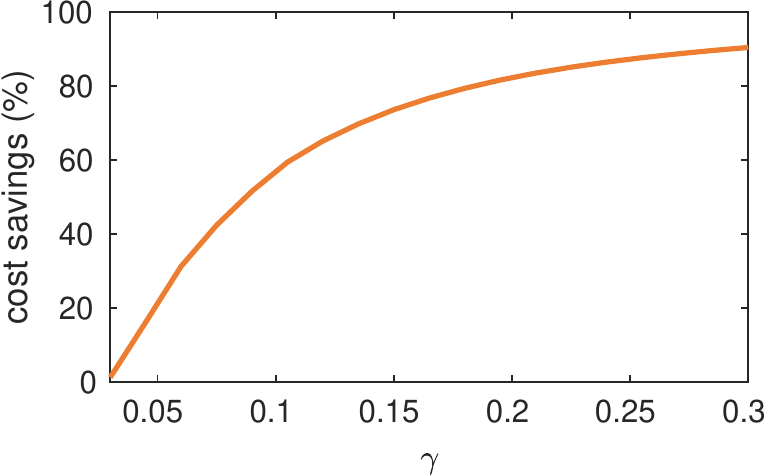}}
			\label{fig:gamma_costs}}
		\subfigure[Impact of demand flexibility]{{\includegraphics[width=0.59\columnwidth]{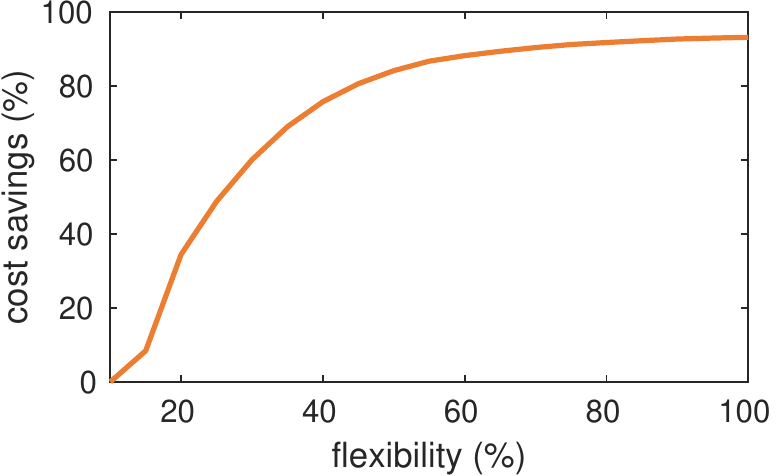}}
			\label{fig:flex_costs}}
		\subfigure[Frequency Deviation \& Total Cost]{{\includegraphics[width=0.64\columnwidth]{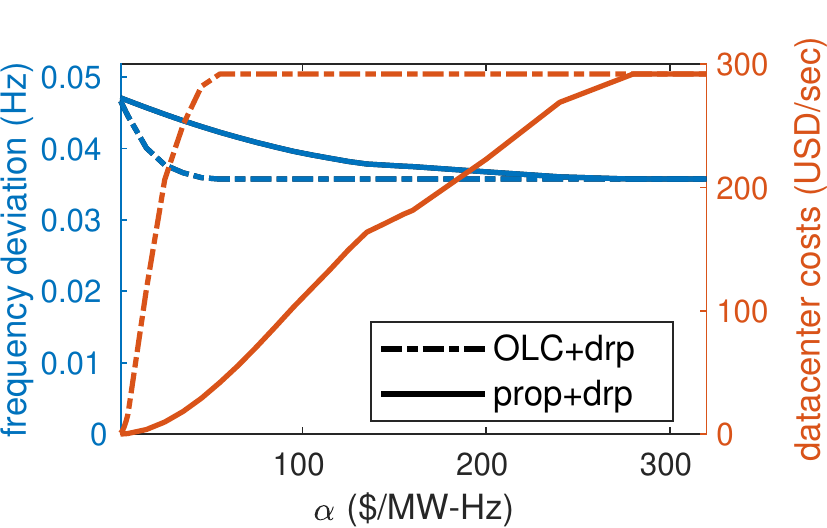}}
			\label{fig:weight_tradoff}}
		\vspace{-0.05in}							
		\fixes{\caption{Sensitivity analysis in terms of cost savings on the network of datacenters by using the proposed control instead of OLC for: (a) Interdependent cost coefficient; (b) Demand flexibility.
				(c) Trade-off between the equilibrium frequency deviation and cost to the network of datacenters by varying the frequency deviation cost coefficient $\alpha$.}}
		\label{fig:sensitivity_analysis}
	\end{center}
	\vspace{-0.05in}
\end{figure*}
}

\subsection{Stability analysis}


\vspace{-0.05in}
\fixes{The proposed control stabilizes the frequency and each load decision within 20 seconds of the generation loss whereas droop control alone is not fully stable until after 25 seconds have passed (Figure \ref{fig:load_freq}).
Also, the additional load participation from the network of datacenters helps decrease the deviation in frequency as compared to droop control only, which is widely used now.
Since both the proposed control and OLC converge to an equilibrium frequency within 20 seconds, this shows that incorporating interdependent costs does not negatively impact the speed at which load participation helps stabilize the power network.
Note that for each control scheme in Figure \ref{fig:load_freq}, the frequencies for the separate buses are so close that they are hard to be distinguished from one another.}


Additionally in Figure \ref{fig:load}, all of the loads reach their equilibrium points at different times which are all within \fixes{20 seconds} of the generation loss.
It is worth noting that the most efficient datacenter DC 1 actually increases its load instead of decreasing it.
This is because the value of $(\omega_1-a_1\mu)$ remains positive throughout \fixes{most of} the simulation due to the product of a negative auxiliary variable $\mu$ and a relatively large computational efficiency $a_1$ counteracting and surpassing the negative deviation of frequency $\omega_1$.
That positive value is used in the proposed control \eqref{eq:control_dj} which is an increasing function.

As datacenters participate in PFC, they suffer the costs from decreasing their power consumption.
Figure \ref{fig:cost_time} shows that \fixes{their total cost under} both OLC and the proposed control increase right after the incident happens.
The cost of our proposed control is
less than \fixes{25\%} compared to OLC. 
This is because our method takes the interdependent cost into account while OLC does not.



\subsection{Equilibrium cost analysis}


\fixes{The proposed control shows a 75\% decrease in total cost incurred by the network of datacenters from OLC and meets the lower bound since the control laws are solving GFC \eqref{eq:gfc} at the equilibrium point (See Figure \ref{fig:breakdown_cost}).}
While on the other hand, it has a larger deviation of frequency than OLC (See Figure \ref{fig:load_freq}) because OLC underestimates the actual total cost of decreasing the load consumptions.
This is caused by OLC minimizing only for independent and deviated frequency costs (i.e. last two terms of \eqref{eq:obj_sim}) as compared to the proposed control which minimizes the sum of all three cost types.
\fixes{This results in OLC having a 15\% smaller equilibrium deviation of frequency than the proposed control.}

Also, the proposed control will utilize smaller power deviations from higher efficient datacenters than OLC.
\fixes{In fact, Figure \ref{fig:cost} shows that OLC uses all of the available capacity in the datacenters whereas the proposed control implements cost-balanced deviations.}
Higher values of computational efficiency $a_j$ cause power deviations to have larger marginal increases to the interdependent cost (i.e. the first term of \eqref{eq:obj_sim}) than for lower values of $a_j$.
\fixes{This causes the datacenters with higher than average efficiencies (DC 1-3) to have less negative deviations in power (See Figure \ref{fig:load_deviation}).}



\subsection{Sensitivity analysis}
\label{sec:perf_sensitivity}


\fixes{\paragraph*{Communication delay ($\Delta$ seconds)}
When broadcasting out the value of $\mu$ through the communication network, it is possible for there to be some short delay.
Since delays may cause slow stabilization of the system, we evaluate their impacts on the frequency's convergence to equilibrium.
To do this, we implement the same proposed control laws as before but instead of using the instantaneous $\mu$, we use $\mu$ that is $\Delta$ seconds old.
The delays tested are 0.5 and 1.0 seconds and are compared against the case with no delay.
Additionally, we also take into account how often each datacenter can change its power decision by discretizing the control actions into 0.1 second timeslots which is reasonable for controlling servers~\cite{meisner2009powernap}.
Figure \ref{fig:delay_freq} shows that proposed control converges well even in the presence of communication delays.
It is interesting to note that although the trajectories with the delay converge slower at the tail end of the time, they don't dip down as low as the trajectory without delay.
This is because with a communication delay, the datacenters react immediately without knowing the impact on the interdependent cost due to $\mu$ be $\Delta$ seconds out of date.
But as $\mu$ gets updated in time, the datacenters gradually take the interdependent costs into account.}

\fixes{\paragraph*{Disturbance Size}
Since power networks can experience different disturbance sizes, we vary the size of the power loss at bus 39 between 100 MW and 1 GW.
As expected, a larger power loss results in a larger in equilibrium deviation in frequency (See Figure \ref{fig:power_freq_dev}) that in turn causes larger costs to the network of datacenters due to the control laws (See Figure \ref{fig:power_costs}).
Additionally, since both the equilibrium deviation and the total cost increase approximately linearly with respect to the power loss, this shows that the tradeoff between equilibrium frequency and the total cost to the datacenters is also approximately linear.
Note that during normal operation of the power network, the controllable loads would constantly change by small values.
To counteract degradation of the equipment's lifetime, data centers can optimize both energy efficiency and lifetime via DVFS~\cite{das2014combined}.}

\lversion{\paragraph*{Impact of interdependence}
In order to measure the effect that the interdependent cost has on the cost savings of the proposed control, we vary $\gamma\in[0.03,0.3]$ (See Figure \ref{fig:gamma_costs}).
While low interdependent costs ($\gamma\sim 0.03$) make the cost savings between the proposed control and OLC insignificant, higher interdependent costs result in larger cost savings for the proposed control.}

\lversion{\paragraph*{Demand flexiblity ($X$\%)} 
As datacenters may not be able to reduce all of their demand at this fast timescale,
next we evaluate the impacts of datacenters' demand flexibility, which is the fraction of the nominal load that can be changed, i.e. $d_j \in [25*(1-X/100),30]$ (cf. Figure \ref{fig:flex_costs}).
Observe that the cost savings remain significant for most of the range except just \fixes{below 20\%} demand flexibility.
This is where both proposed and OLC total costs converge because the box constraints \eqref{eq:gfc_bnds_dj} become active and every datacenter reduces its demand to its lowest allowed value.}

\lversion{\fixes{\paragraph*{Frequency deviation cost coefficient $\alpha$}
The extent to which the network of datacenters participates in PFC and incurs the cost of doing so depends directly on the cost of the frequency deviation to the power network as expressed by the cost coefficient $\alpha$.
By varying $\alpha$ between 0 and 300 \$/MW-Hz we can see the trade-off between the equilibrium frequency deviation and the cost incurred by the network of datacenters (See Figure \ref{fig:weight_tradoff}).
At the extreme values of $\alpha$, the proposed control and OLC give the same trade-off that correspond to either no participation in PFC or expending all available load capacity.
However, between these extremes the proposed control is more conservative in its participation due to taking account of the interdependent costs, thus it incurs a lower cost with a higher equilibrium frequency deviation.}}


\section{CONCLUSION}
Frequency control is an important class of mechanisms to ensure high-quality power for the grid and has been an increasingly attractive option for load participation.
Prior work on distributed load control assumed that the cost for changing load demand at each geographic location is independent from the rest. 
However, in some networked systems such as a network of datacenters that support cloud computing, the decisions made at each datacenter affect the group because of interdependent costs and so require geographic coordination.
In this paper, we designed a set of distributed control laws
\revjournal{inspired by the subgradient method}
that can handle interdependent costs in such a way that is provably stable.
Also, we proved that the final equilibrium point optimally balances the cost of load participation with the frequency level deviated from its nominal set point.
We tested our control laws for a network of datacenters on a realistic emulator, Power System Toolbox. We found that there is significant cost savings with the proposed control law over existing benchmarks that do not account for interdependent costs, and the proposed control is robust to communication delays. 

\lversion{The results presented in this paper open up
\revjournal{four}
distinct future research directions.
The first is to explore how other interdependent systems (e.g. electric mass transit, thermal grids) can be used to help increase the reliability of the grid.
The second is to investigate how a network of datacenters that are located in multiple disjoint power grids can utilize their interconnectedness to enhance the reliability in those grids.
\revjournal{The third is to separate the computational workload into different resource demands, each with a different interdependent cost and computational efficiency.}
The fourth is to apply the distributed control laws to a system with a higher-order transient stability model.
For our future work, we plan to extend the proposed control laws to take into account further network effects such as power flow constraints across lines and network losses.}

\bibliographystyle{IEEEtran}
\bibliography{reference}


\appendix
\subsection{Proof of Theorem \ref{thm:optimality}}
\label{sec:appendix_opt_proof}
\begin{proof}
	Since GFC is a convex optimization problem (Lemma \ref{thm:gfc_convex}) and is feasible (Assumption \ref{as:feas}), we need to show that the proposition satisfies the KKT conditions for optimality \eqref{eq:gfc_kkt}.
	
	From the strict convexity property in Assumption \ref{as:convexity}, the function $(c'_j)^{-1}(\cdot)$ is well defined and increasing. Therefore, control law \eqref{eq:control_dj} can be separated into three cases: (i) $d_j^* \in (\underline{d}_j,\overline{d}_j)$, thus $d_j^* = (c'_j)^{-1}(\omega_j^*-a_j\mu^*)$; (ii) $d_j^*=\underline{d}_j$, thus $(c'_j)^{-1}(\omega_j^*-a_j\mu^*)\leq \underline{d}_j$; (iii) $d_j^*=\overline{d}_j$, thus $(c'_j)^{-1}(\omega_j^*-a_j\mu^*)\geq \overline{d}_j$.
	Case (i) results in:
	\begin{align}
		\underline{d}_j < (c'_j)^{-1}(\omega_j^*-a_j\mu^*) < \overline{d}_j  \nonumber \\
		c'_j(\underline{d}_j) < \omega_j^*-a_j\mu^* < c'_j(\overline{d}_j ). \nonumber
	\end{align}
	
	\noindent since $c'_j(\cdot)$ and $(c'_j)^{-1}(\cdot)$ are increasing functions.
	Likewise, Case (ii) results in:
	\begin{align}
		(c'_j)^{-1}(\omega_j^*-a_j\mu^*) & \leq \underline{d}_j \nonumber\\
		\omega_j^*-a_j\mu^* & \leq c'_j(\underline{d}_j) \nonumber
	\end{align}
	
	\noindent and Case (iii) results in:
	\begin{align}
		\overline{d}_j \leq (c'_j)^{-1}(\omega_j^*-a_j\mu^*) \nonumber\\
		c'_j(\overline{d}_j) \leq \omega_j^*-a_j\mu^*. \nonumber
	\end{align}
	
	\noindent Let us define the following two variables:
	\begin{align}
		\underline{\kappa}_j & :=[c'_j(\underline{d}_j)-(\omega_j^*-a_j\mu^*)]^+ \nonumber\\
		\overline{\kappa}_j & :=[(\omega_j^*-a_j\mu^*)-c'_j(\overline{d}_j)]^+ \nonumber
	\end{align}
	
	\noindent which satisfy \eqref{eq:gfc_kkt_dual1} \eqref{eq:gfc_kkt_dual2}.
	Note that in: Case (i), $\underline{d}_j-d_j^*\neq 0$, $d_j^*-\overline{d}_j\neq 0$, $\underline{\kappa}_j=0$, $\overline{\kappa}_j=0$; Case (ii), $\underline{d}_j-d_j^*=0$, $d_j^*-\overline{d}_j\neq 0$, $\underline{\kappa}_j\geq 0$, $\overline{\kappa}_j=0$; Case (iii) $\underline{d}_j-d_j^*\neq 0$, $d_j^*-\overline{d}_j= 0$, $\underline{\kappa}_j=0$, $\overline{\kappa}_j\geq 0$. Therefore each case satisfies \eqref{eq:gfc_kkt_cs1} \eqref{eq:gfc_kkt_cs2}.
	
	Additionally, each case from their definitions satisfies:
	\begin{align}
		c_j'(d_j^*) - \underline{\kappa}_j + \overline{\kappa}_j = \omega_j^* - a_j\mu^* \label{eq:gfc_kkt_proof_stat2}
	\end{align}
	
	\noindent and we can use the equilibrium condition \eqref{eq:eqlbrm_eq_omega} to define the following variable since each frequency deviation must be equal to a single value:
	\begin{align}
		\lambda:=\omega^*=\omega_j^* \quad \forall j\in\mathcal{N}. \nonumber
	\end{align}
	
	\noindent which is equivalent to \eqref{eq:gfc_kkt_stat3} when multiplied by $\left(D_j\fixes{+\frac{1}{R_j}}\right)$.
	Therefore, substituting $\lambda$ for $\omega_j^*$ in \eqref{eq:gfc_kkt_proof_stat2} becomes \eqref{eq:gfc_kkt_stat2}.
	
	
	The time derivative of \eqref{eq:control_mu} at equilibrium gives:
	\begin{align}
		\frac{d\mu^*}{dt} & = \beta\left(s^*-(g')^{-1}(\mu^*)\right) \nonumber
	\end{align}
	
	\noindent From the strict convexity property in Assumption \ref{as:convexity}, the function $(g')^{-1}(\cdot)$ is well defined. Since $d\mu^*/dt=0$ from equilibrium condition \eqref{eq:eqlbrm2_mu}, then we have that $(g')^{-1}(\mu^*)=s$ which is equivalent to \eqref{eq:gfc_kkt_stat1}.
	
	System equation \eqref{eq:s_of_djs} is equivalent to \eqref{eq:gfc_kkt_prim1}.
	
	To get \eqref{eq:gfc_kkt_prim2}, start with \eqref{eq:Swing_full2} and apply \eqref{eq:pj_def}, and equilibrium condition \eqref{eq:eqlbrm_omega_constant}:
	\begin{align}
		p_j - d_j^* - D_j\omega_j^* \fixes{-\frac{1}{R_j}\omega_j^*} - F_j(\boldsymbol{\theta}^*) = 0. \nonumber
	\end{align}
	
	\noindent Summing the above equation for all $j\in\mathcal{N}$ gets \eqref{eq:gfc_kkt_prim2} because $\sum_{j\in\mathcal{N}}F_j(\boldsymbol{\theta}^*) = 0$ from each term canceling when summing \eqref{eq:NetFlow}.
	
	Because the range of \eqref{eq:control_dj} is $[\underline{\delta}_j,\overline{\delta}_j]$, \eqref{eq:gfc_kkt_prim3} \eqref{eq:gfc_kkt_prim4} are satisfied.
	
	Since all of the KKT conditions have been satisfied, we get the resultant.
\end{proof}

\subsection{Lemmas}
The following Lemmas are used in the proof of Theorem \ref{thm:stability}.

\begin{lemma}\label{thm:dV1dt}
	The time derivative of $\mathcal{V}_1$ defined in \eqref{eq:stab_V1} with feedback control \eqref{eq:control} is \eqref{eq:stab_dV1dt}.
\end{lemma}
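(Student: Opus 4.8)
The plan is to derive \eqref{eq:stab_dV1dt} by differentiating the candidate $\mathcal{V}_1$ of \eqref{eq:stab_V1} along the closed-loop trajectories and then eliminating the equilibrium quantities with the optimality characterization of Theorem~\ref{thm:optimality}. After substituting the algebraic relations \eqref{eq:pj_def} and \eqref{eq:s_of_djs} and the static feedback \eqref{eq:control_dj}, the genuine state is $(\boldsymbol{\theta},\boldsymbol{\omega},\mu)$, so I expect $\mathcal{V}_1$ to be of the classical energy type: a kinetic part quadratic in $\boldsymbol{\omega}-\boldsymbol{\omega}^*$ weighted by the inertias $M_j$, a potential part built from $\cos(\theta_i-\theta_j)$ differences (the Bregman-type term that is positive definite in the region of Assumption~\ref{as:line_angles}), and a part quadratic in $\mu-\mu^*$ scaled by $\tfrac{1}{2\beta}$. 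First I would write $\dot{\mathcal{V}}_1$ as the sum of the chain-rule contributions of these parts.

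Next I would substitute the dynamics: $M_j\dot{\omega}_j = p_j - (D_j+\tfrac{1}{R_j})\omega_j - d_j - F_j(\boldsymbol{\theta})$ from \eqref{eq:Swing_full2} with \eqref{eq:pj_def}; $\dot{\theta}_j=\omega_j$ from \eqref{eq:FreqDev}; $\dot{\mu}=\beta\big(s-(g')^{-1}(\mu)\big)$ obtained by differentiating \eqref{eq:control_mu}; and $s-s^*=\sum_{j\in\mathcal{D}}a_j(d_j-d_j^*)$ from \eqref{eq:s_of_djs}. The key algebraic step is the line-flow bookkeeping: by the antisymmetry in \eqref{eq:NetFlow}, $\sum_{j\in\mathcal{N}}\omega_j F_j(\boldsymbol{\theta})=\sum_{(i,j)\in\mathcal{E}}Y_{ij}\sin(\theta_i-\theta_j)(\omega_i-\omega_j)$, which (using $\dot{\theta}_i-\dot{\theta}_j=\omega_i-\omega_j$) is exactly cancelled by the derivative of the potential part up to the offset carried by $F_j(\boldsymbol{\theta}^*)$. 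I would then remove all equilibrium terms using the identities $p_j-F_j(\boldsymbol{\theta}^*)=(D_j+\tfrac{1}{R_j})\omega^*+d_j^*$, $s^*=(g')^{-1}(\mu^*)$, and $d_j^*=\big[(c'_j)^{-1}(\omega^*-a_j\mu^*)\big]_{\underline{d}_j}^{\overline{d}_j}$ that hold at an equilibrium of Definition~\ref{def:eqlbrm2} by Theorem~\ref{thm:optimality}, together with $\sum_{j\in\mathcal{N}}F_j(\boldsymbol{\theta})\equiv 0$. After the residual flow terms cancel (being weighted by the common $\omega^*$), $\dot{\mathcal{V}}_1$ collapses to $-\sum_{j\in\mathcal{N}}(D_j+\tfrac{1}{R_j})(\omega_j-\omega^*)^2$, minus $(\mu-\mu^*)\big((g')^{-1}(\mu)-(g')^{-1}(\mu^*)\big)$, minus the coupling term $\sum_{j\in\mathcal{D}}\big[(\omega_j-\omega^*)-a_j(\mu-\mu^*)\big](d_j-d_j^*)$, which I take to be (a rearrangement of) \eqref{eq:stab_dV1dt}.

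The step I expect to be most delicate is the treatment of the static saturated feedback \eqref{eq:control_dj}. Since $d_j$ is a nonsmooth, piecewise-defined function of $\omega_j-a_j\mu$, any $d_j$-dependence inside $\mathcal{V}_1$ would require one-sided derivatives, and even without it one must carry the coupling term in its exact unsimplified form. I would handle this as in the proof of Theorem~\ref{thm:optimality}, splitting into the three cases $d_j^*\in(\underline{d}_j,\overline{d}_j)$, $d_j^*=\underline{d}_j$, $d_j^*=\overline{d}_j$ and invoking Assumption~\ref{as:convexity} so that $(c'_j)^{-1}(\cdot)$ and the projected map are well defined and nondecreasing; this is also what gives the coupling term the sign $\le 0$ that Theorem~\ref{thm:stability} later needs, though that sign is not required for the identity \eqref{eq:stab_dV1dt} itself. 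Everything else is routine differentiation, the only real care being consistent bookkeeping of the equilibrium offsets $\omega^*$, $d_j^*$, $s^*$, $\mu^*$ throughout the cancellations.
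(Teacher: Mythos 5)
Your proposal misidentifies the object being differentiated. In the paper, $\mathcal{V}_1$ is \emph{only} the term $\frac{1}{2\beta}(\mu-\mu^*)^2$; the $M_j$-weighted kinetic part and the line-flow potential part that you fold into your "classical energy function" are the separate terms $\mathcal{V}_2$ and $\mathcal{V}_3$, whose derivative is the subject of the next lemma (Lemma~\ref{thm:dV23dt}, imported from \cite{zhao2014optimal}). Consequently, almost all of the machinery you deploy --- the swing equation, the antisymmetric line-flow bookkeeping, the cancellation against the derivative of the $\cos(\theta_i-\theta_j)$ potential, the identity $p_j-F_j(\boldsymbol{\theta}^*)=(D_j+\tfrac{1}{R_j})\omega^*+d_j^*$ --- is irrelevant to this lemma, and what you ultimately compute is $\frac{d\mathcal{V}}{dt}$ for the full Lyapunov function (the expression assembled later in the proof of Theorem~\ref{thm:stability}), not $\frac{d\mathcal{V}_1}{dt}$ as asserted in \eqref{eq:stab_dV1dt}. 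Your worry about one-sided derivatives of the saturated feedback \eqref{eq:control_dj} is likewise moot here: $\mathcal{V}_1$ depends only on $\mu$, so $d_j$ is never differentiated --- it enters purely algebraically through $s=\sum_j a_j d_j - b$.

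That said, the correct proof is contained inside your argument as the $\mu$-channel computation. The paper simply writes $\frac{d\mathcal{V}_1}{dt}=\frac{1}{\beta}(\mu-\mu^*)\dot{\mu}$, substitutes $\dot{\mu}=\beta\bigl(s-(g')^{-1}(\mu)\bigr)$ and $s=\sum_j a_j d_j-b$, then adds and subtracts the equilibrium quantities $\sum_j a_j d_j^*$ and $(g')^{-1}(\mu^*)$, using $\sum_j a_j d_j^*-b=(g')^{-1}(\mu^*)$ to kill the offset, which yields the two terms $\sum_j(a_j\mu-a_j\mu^*)(d_j-d_j^*)$ and $-(\mu-\mu^*)\bigl((g')^{-1}(\mu)-(g')^{-1}(\mu^*)\bigr)$. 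One refinement: that equilibrium identity follows directly from Definition~\ref{def:eqlbrm2} applied to the control law \eqref{eq:control_mu} (i.e., $d\mu^*/dt=0$); you do not need to invoke the optimality characterization of Theorem~\ref{thm:optimality} for it, which is heavier machinery than the lemma requires.
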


\begin{proof}
	The time derivative of \eqref{eq:control_mu} is:
	\begin{subequations}
		\begin{align}
			\frac{d\mu}{dt} & =  \beta\left(s-(g')^{-1}(\mu)\right) \\
			& = \beta\left(\sum_{j\in\mathcal{N}}a_jd_j - b-(g')^{-1}(\mu)\right) \label{eq:dmudt_proof}
		\end{align}
	\end{subequations}
	where the second equality comes from replacing $s$ with the RHS of \eqref{eq:s_of_djs} and setting that $d_j=\underline{d}_j=\overline{d}_j=0:\forall j\notin\mathcal{D}$.
	At equilibrium, the condition \eqref{eq:eqlbrm2_mu} can be applied to the above equation to give:
	\begin{align}
		\sum_{j\in\mathcal{N}}a_jd_j^* - b = (g')^{-1}(\mu^*) \label{eq:eqlbrm2_mu_proof}
	\end{align}
	
	Take the time derivative of \eqref{eq:stab_V1}:
	\begin{subequations}
		\begin{align}
			\frac{d\mathcal{V}_1}{dt} & = \frac{1}{\beta}(\mu-\mu^*)\frac{d\mu}{dt} \\
			& = (\mu-\mu^*) \left(\sum_{j\in\mathcal{N}}a_jd_j - b-(g')^{-1}(\mu)\right) \\
			& = (\mu-\mu^*) \left(\sum_{j\in\mathcal{N}}a_jd_j-\sum_{j\in\mathcal{N}}a_jd^*_j\right) \nonumber \\
			& \quad - (\mu-\mu^*)\left((g')^{-1}(\mu)-(g')^{-1}(\mu^*)\right) \nonumber\\
			& \quad +(\mu-\mu^*)\left(\sum_{j\in\mathcal{N}}a_jd^*_j-b-(g')^{-1}(\mu^*)\right) \\
			& = (\mu-\mu^*)\sum_{j\in\mathcal{N}}a_j(d_j-d^*_j) \nonumber \\
			& \quad - (\mu-\mu^*)\left((g')^{-1}(\mu)-(g')^{-1}(\mu^*)\right) \\
			& = \sum_{j\in\mathcal{N}}(a_j\mu-a_j\mu^*)(d_j-d^*_j) \nonumber \\
			& \quad - (\mu-\mu^*)\left((g')^{-1}(\mu)-(g')^{-1}(\mu^*)\right)
		\end{align}
	\end{subequations}
	The second equality comes from applying the differential equation \eqref{eq:dmudt_proof}.
	The third equality comes from adding $0=((g')^{-1}(\mu^*)-(g')^{-1}(\mu^*))$ and $0=\left(\sum_{j\in\mathcal{N}}a_jd^*_j-\sum_{j\in\mathcal{N}}a_jd^*_j\right)$ to the second factor and distributing out $(\mu-\mu^*)$.
	The fourth equality comes from combining the summations in the first term and from the equilibrium equation \eqref{eq:eqlbrm2_mu_proof} applied to the third term.
	The last equality comes from bringing $(\mu-\mu^*)$ into the summation and distributing $a_j$.
	Then after defining $d_j$ as the function from \eqref{eq:control_dj} we get \eqref{eq:stab_dV1dt}.
\end{proof}


The following Lemma is used in the proof of Theorem \ref{thm:stability} and comes from \cite{zhao2014optimal}(Theorem 2).

\begin{lemma}\label{thm:dV23dt}
	The time derivative of $\mathcal{V}_2+\mathcal{V}_3$ defined in \eqref{eq:stab_V2} \eqref{eq:stab_V3} for system \eqref{eq:FreqDev} \eqref{eq:pj_def} \eqref{eq:NetFlow} \eqref{eq:Swing_full2} with feedback control \eqref{eq:control_dj}, is \eqref{eq:stab_dV23dt}.
\end{lemma}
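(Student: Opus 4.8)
The plan is to differentiate $\mathcal{V}_2$ and $\mathcal{V}_3$ separately along the closed-loop trajectories and then add them so that the network-coupling terms cancel. Recall that $\mathcal{V}_2$ in \eqref{eq:stab_V2} is the kinetic-energy term $\sum_{j\in\mathcal{N}}\tfrac{M_j}{2}(\omega_j-\omega^*)^2$ and $\mathcal{V}_3$ in \eqref{eq:stab_V3} is the Bregman-type potential built from the line power flows $F_j(\boldsymbol{\theta})$. First I would differentiate $\mathcal{V}_2$, using the swing equation \eqref{eq:Swing_full2} to replace $M_j\frac{d\omega_j}{dt}$ by $P_j-F_j(\boldsymbol{\theta})$ and then \eqref{eq:pj_def} to expand $P_j=p_j-(D_j+\tfrac{1}{R_j})\omega_j-d_j$. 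Subtracting the equilibrium identity $0=p_j-(D_j+\tfrac{1}{R_j})\omega^*-d_j^*-F_j(\boldsymbol{\theta}^*)$ (valid by Definition~\ref{def:eqlbrm} and Assumption~\ref{as:feas}) writes the dynamics in incremental form and yields
\begin{align*}
\frac{d\mathcal{V}_2}{dt} &= -\sum_{j\in\mathcal{N}}\Big(D_j+\tfrac{1}{R_j}\Big)(\omega_j-\omega^*)^2 \\
&\quad -\sum_{j\in\mathcal{N}}(\omega_j-\omega^*)(d_j-d_j^*) \\
&\quad -\sum_{j\in\mathcal{N}}(\omega_j-\omega^*)\big(F_j(\boldsymbol{\theta})-F_j(\boldsymbol{\theta}^*)\big).
\end{align*}

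Second, I would differentiate $\mathcal{V}_3$ using $\frac{d\theta_j}{dt}=\omega_j$ from \eqref{eq:FreqDev}. The whole point of the specific form of $\mathcal{V}_3$ is that its gradient with respect to $\boldsymbol{\theta}$, contracted with $\dot{\boldsymbol{\theta}}$, reproduces exactly the incremental net-flow term: a short computation per directed line $(j,k)\in\mathcal{E}$, combining the outgoing and incoming sums in \eqref{eq:NetFlow}, gives $\frac{d\mathcal{V}_3}{dt}=\sum_{j\in\mathcal{N}}\omega_j\big(F_j(\boldsymbol{\theta})-F_j(\boldsymbol{\theta}^*)\big)$; and since $\sum_{j\in\mathcal{N}}F_j(\boldsymbol{\theta})=0$ for every $\boldsymbol{\theta}$ (each $Y_{jk}\sin(\theta_j-\theta_k)$ term appears once with each sign), this equals $\sum_{j\in\mathcal{N}}(\omega_j-\omega^*)\big(F_j(\boldsymbol{\theta})-F_j(\boldsymbol{\theta}^*)\big)$. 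Adding $\frac{d\mathcal{V}_2}{dt}$ and $\frac{d\mathcal{V}_3}{dt}$ cancels the coupling terms, and after substituting the control law \eqref{eq:control_dj} for $d_j$ the remainder is precisely \eqref{eq:stab_dV23dt}.

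The main obstacle is the second step: carefully differentiating $\mathcal{V}_3$ and verifying that the reference-angle terms $\sin(\theta_j^*-\theta_k^*)$ together with the per-line contributions assemble into exactly $\sum_{j\in\mathcal{N}}\omega_j\big(F_j(\boldsymbol{\theta})-F_j(\boldsymbol{\theta}^*)\big)$, respecting the orientation conventions of \eqref{eq:NetFlow} in which every line feeds both its head bus and its tail bus. This is pure bookkeeping over directed edges, but it is where sign errors creep in; everything else is routine algebra. Since this derivative is exactly the one established in \cite{zhao2014optimal}(Theorem~2) --- the datacenter load $d_j$ and the augmented coefficient $D_j+\tfrac{1}{R_j}$ here play the same roles as the generic controllable load and damping coefficient there, and our \eqref{eq:pj_def} and \eqref{eq:Swing_full2} match that model term by term --- I would either reproduce that argument or invoke it directly after noting this correspondence. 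The result then combines with Lemma~\ref{thm:dV1dt} in the proof of Theorem~\ref{thm:stability}.
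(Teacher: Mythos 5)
Your proposal is correct and follows essentially the same route as the paper's proof: both use the swing equation \eqref{eq:Swing_full2} with \eqref{eq:pj_def}, reassemble the per-line sum $\sum_{(i,j)\in\mathcal{E}}Y_{ij}(\sin\theta_{ij}-\sin\theta^*_{ij})(\omega_i-\omega_j)$ into $\sum_{j\in\mathcal{N}}\omega_j(F_j(\boldsymbol{\theta})-F_j(\boldsymbol{\theta}^*))$, exploit $\sum_{j\in\mathcal{N}}F_j(\boldsymbol{\theta})=0$ together with the synchronized equilibrium frequency, and close with the equilibrium balance $F_j(\boldsymbol{\theta}^*)=p_j-d_j^*-(D_j+\tfrac{1}{R_j})\omega_j^*$. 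Subtracting the equilibrium identity at the start of the $\mathcal{V}_2$ computation rather than at the end is only a reordering of the same algebra, and your appeal to the correspondence with \cite{zhao2014optimal} (Theorem 2) matches how the paper itself frames this lemma.
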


\oproof{
\begin{proof}
	Take the time derivative of $\mathcal{V}_2+\mathcal{V}_3$:
	\begin{subequations}
		\begin{align}
			& \frac{d\mathcal{V}_2}{dt}+\frac{d\mathcal{V}_3}{dt} \nonumber \\
			& \quad\quad = \sum_{j\in\mathcal{N}}M_j(\omega_j-\omega_j^*)\frac{d\omega_j}{dt} \nonumber \\
			& \quad\quad\quad + \sum_{(i,j)\in\mathcal{E}}Y_{ij}(\sin \theta_{ij} - \sin \theta^*_{ij})\left(\frac{d\theta_i}{dt}-\frac{d\theta_j}{dt}\right) \\
			& \quad\quad = \sum_{j\in\mathcal{N}}(\omega_j-\omega_j^*)\left(-D_j\omega_j+p_j-d_j\fixes{-\frac{1}{R_j}\omega_j}-F_j(\boldsymbol{\theta})\right) \nonumber \\
			& \quad\quad\quad + \sum_{(i,j)\in\mathcal{E}}Y_{ij}(\sin \theta_{ij} - \sin \theta^*_{ij})(\omega_i-\omega_j) \\
			& \quad\quad = \sum_{j\in\mathcal{N}}(\omega_j-\omega_j^*)\left(-D_j\omega_j+p_j-d_j\fixes{-\frac{1}{R_j}\omega_j}-F_j(\boldsymbol{\theta})\right)\nonumber\\
			& \quad\quad\quad + \sum_{j\in\mathcal{N}}(\omega_j-\omega_j^*)\left(D_j\omega_j^*\fixes{+\frac{1}{R_j}\omega^*_j}-D_j\omega_j^*\fixes{-\frac{1}{R_j}\omega^*_j}\right) \nonumber \\
			& \quad\quad\quad + \sum_{(i,j)\in\mathcal{E}}Y_{ij}(\sin \theta_{ij} - \sin \theta^*_{ij})(\omega_i-\omega_j) \\
			& \quad\quad = \sum_{j\in\mathcal{N}}-\left(D_j\fixes{+\frac{1}{R_j}}\right)(\omega_j-\omega_j^*)^2 \\
			& \quad\quad\quad + \sum_{j\in\mathcal{N}} (\omega_j-\omega_j^*)\left(p_j-d_j-D_j\omega_j^*\fixes{-\frac{1}{R_j}\omega^*_j}\right)\nonumber\\
			& \quad\quad\quad -\sum_{j\in\mathcal{N}}(\omega_j-\omega_j^*)F_j(\boldsymbol{\theta})\nonumber\\
			& \quad\quad\quad + \sum_{(i,j)\in\mathcal{E}}Y_{ij}(\sin \theta_{ij} - \sin \theta^*_{ij})(\omega_i-\omega_j) \label{eq:stab_V23dot_2}
		\end{align}
	\end{subequations}
	The second equality comes from substituting in \eqref{eq:FreqDev} \eqref{eq:Swing_full2} for their associated derivatives and plugging in \eqref{eq:pj_def}.
	The third equality comes from adding $0=\sum_{j\in\mathcal{N}}(\omega_j-\omega_j^*)\left(D_j\omega_j^*\fixes{+\frac{1}{R_j}\omega^*_j}-D_j\omega_j^*\fixes{-\frac{1}{R_j}\omega^*_j}\right)$.
	The last equality comes from combining $-D_j\omega_j\fixes{-\frac{1}{R_j}\omega_j}$ in the first term and $D_j\omega^*_j\fixes{+\frac{1}{R_j}\omega^*_j}$ in the second term and factoring out $-(\omega_j-\omega^*_j)$, and distributing out $(\omega_j-\omega^*_j)$ to $-F_j(\boldsymbol{\theta})$.
	
	The last term of \eqref{eq:stab_V23dot_2} can be rearranged:
	\begin{subequations}
		\begin{align}
			&\sum_{(i,j)\in\mathcal{E}}Y_{ij}(\sin \theta_{ij} - \sin \theta^*_{ij})(\omega_i-\omega_j) \\
			& \quad = \sum_{(i,j)\in\mathcal{E}}Y_{ij}(\sin \theta_{ij} - \sin \theta^*_{ij})\omega_i \nonumber\\
			& \quad\quad -\sum_{(i,j)\in\mathcal{E}}Y_{ij}(\sin \theta_{ij} - \sin \theta^*_{ij})\omega_j \\
			& \quad = \sum_{i\in\mathcal{N}}\omega_i\sum_{j:i\rightarrow j}Y_{ij}(\sin \theta_{ij} - \sin \theta^*_{ij})\nonumber\\
			& \quad\quad -\sum_{j\in\mathcal{N}}\omega_j\sum_{i:i\rightarrow j}Y_{ij}(\sin \theta_{ij} - \sin \theta^*_{ij}) \\
			& \quad = \sum_{j\in\mathcal{N}}\omega_j\sum_{k:j\rightarrow k}Y_{jk}(\sin \theta_{jk} - \sin \theta^*_{jk})\nonumber\\
			& \quad\quad -\sum_{j\in\mathcal{N}}\omega_j\sum_{i:i\rightarrow j}Y_{ij}(\sin \theta_{ij} - \sin \theta^*_{ij})\\
			& \quad = \sum_{j\in\mathcal{N}}\omega_j\left(\sum_{k:j\rightarrow k}Y_{jk}\sin \theta_{jk}-\sum_{i:i\rightarrow j}Y_{ij}\sin \theta_{ij}\right) \nonumber \\
			& \quad\quad - \sum_{j\in\mathcal{N}}\omega_j\left(\sum_{k:j\rightarrow k}Y_{jk}\sin \theta^*_{jk}-\sum_{i:i\rightarrow j}Y_{ij}\sin \theta^*_{ij}\right)\\
			& \quad = \sum_{j\in\mathcal{N}}\omega_j(F_j(\boldsymbol{\theta})-F_j(\boldsymbol{\theta}^*)) \label{eq:simplify_sin23}
		\end{align}
	\end{subequations}
	The first equality comes from distributing out $Y_{ij}(\sin \theta_{ij} - \sin \theta^*_{ij})$.
	The second equality comes from splitting the each summation into two summations for the origin and destination of the directed edges and factoring out the appropriate $\omega_{\cdot}$.
	The third equality comes from relabeling $j$ to $k$ and $i$ to $j$ in the first term.
	The fourth equality comes from grouping equilibrium terms together and trajectory terms together with respect to $\theta_{\cdot\cdot}$.
	The last equality comes from substituting the definition of net power flow \eqref{eq:NetFlow}.
	
	Plug \eqref{eq:simplify_sin23} back into \eqref{eq:stab_V23dot_2} we get:
	\begin{subequations}
		\begin{align}
			& \frac{d\mathcal{V}_2}{dt}+\frac{d\mathcal{V}_3}{dt} \nonumber \\
			& \quad = \sum_{j\in\mathcal{N}}-\left(D_j\fixes{+\frac{1}{R_j}}\right)(\omega_j-\omega_j^*)^2 \nonumber \\
			& \quad\quad + \sum_{j\in\mathcal{N}} (\omega_j-\omega_j^*)\left(p_j-d_j-D_j\omega_j^*\fixes{-\frac{1}{R_j}\omega^*_j}\right)\nonumber\\
			& \quad\quad -\sum_{j\in\mathcal{N}}(\omega_j-\omega_j^*)F_j(\boldsymbol{\theta})\nonumber\\
			& \quad\quad +\sum_{j\in\mathcal{N}}\omega_j(F_j(\boldsymbol{\theta})-F_j(\boldsymbol{\theta}^*))\\
			& \quad = \sum_{j\in\mathcal{N}}-\left(D_j\fixes{+\frac{1}{R_j}}\right)(\omega_j-\omega_j^*)^2 \nonumber \\
			& \quad\quad +\sum_{j\in\mathcal{N}} (\omega_j-\omega_j^*)\left(p_j-d_j-D_j\omega_j^*\fixes{-\frac{1}{R_j}\omega^*_j}\right) \nonumber\\
			& \quad\quad -\sum_{j\in\mathcal{N}}(\omega_j-\omega_j^*)F_j(\boldsymbol{\theta})\nonumber\\
			& \quad\quad +\sum_{j\in\mathcal{N}}(\omega_j-\omega_j^*+\omega_j^*)(F_j(\boldsymbol{\theta})-F_j(\boldsymbol{\theta}^*)) \\
			& \quad = \sum_{j\in\mathcal{N}}-\left(D_j\fixes{+\frac{1}{R_j}}\right)(\omega_j-\omega_j^*)^2\nonumber\\
			& \quad\quad+\sum_{j\in\mathcal{N}} (\omega_j-\omega_j^*)\left(p_j-d_j-D_j\omega_j^*\fixes{-\frac{1}{R_j}\omega^*_j}-F_j(\boldsymbol{\theta}^*)\right) \nonumber \\
			& \quad\quad +\sum_{j\in\mathcal{N}}\omega_j^*(F_j(\boldsymbol{\theta})-F_j(\boldsymbol{\theta}^*))\\
			& \quad = \sum_{j\in\mathcal{N}}-\left(D_j\fixes{+\frac{1}{R_j}}\right)(\omega_j-\omega_j^*)^2 \nonumber\\
			& \quad\quad +\sum_{j\in\mathcal{N}}(\omega_j-\omega_j^*)\left(p_j-d_j-D_j\omega_j^*\fixes{-\frac{1}{R_j}\omega^*_j}-F_j(\boldsymbol{\theta}^*)\right) \label{eq:stab_V23dot_3}
		\end{align}
	\end{subequations}
	The second equality comes from adding $0=\omega^*_j-\omega^*_j$ inside the first factor of the last term.
	The third equality comes from factoring separating out $(\omega_j-\omega_j^*)F_j(\boldsymbol{\theta})$ from the last term and canceling it out with the third term, and putting $-(\omega_j-\omega_j^*)F_j(\boldsymbol{\theta})$ from the last term into the second term.
	The last equality comes from the last term being equal to zero.
	This is because at equilibrium the condition \eqref{eq:eqlbrm_eq_omega} makes $\sum_{j\in\mathcal{N}}\omega_j^*(F_j(\boldsymbol{\theta})-F_j(\boldsymbol{\theta}^*))=\omega^*\sum_{j\in\mathcal{N}}(F_j(\boldsymbol{\theta})-F_j(\boldsymbol{\theta}^*))$ and since $\sum_{j\in\mathcal{N}}F_j(\boldsymbol{\theta})=\sum_{j\in\mathcal{N}}F_j(\boldsymbol{\theta}^*)=0$ from the balance of power.
	
	At equilibrium with the condition \eqref{eq:eqlbrm_omega_constant} applied to \eqref{eq:Swing_full2} with \eqref{eq:pj_def} gives us $F_j(\boldsymbol{\theta}^*)=p_j-d^*_j-D_j\omega^*_j\fixes{-\frac{1}{R_j}\omega^*_j}$ which when substituted in \eqref{eq:stab_V23dot_3}:
	\begin{align}
		\frac{d\mathcal{V}_2}{dt}+\frac{d\mathcal{V}_3}{dt} & = \sum_{j\in\mathcal{N}}-\left(D_j\fixes{+\frac{1}{R_j}}\right)(\omega_j-\omega_j^*)^2 \nonumber \\
		& \quad+ \sum_{j\in\mathcal{N}}(\omega_j-\omega_j^*)(d^*_j-d_j).
	\end{align}
	By applying the function \eqref{eq:control_dj} to the above equation and factoring out a negative we get \eqref{eq:stab_dV23dt}.
\end{proof}
}

\subsection{Proof of Theorem \ref{thm:stability}}
\label{sec:appendix_stability_proof}

\begin{proof}
	Using Lyapunov's method of stability we start with the following energy function of the trajectory $(\boldsymbol{\theta},\boldsymbol{\omega},s,\mu)$:
	\begin{subequations}
		\begin{align}
			\mathcal{V} & = \mathcal{V}_1 + \mathcal{V}_2 + \mathcal{V}_3 \label{eq:stab_V} \\
			\mathcal{V}_1 & = \frac{1}{2\beta}(\mu-\mu^*)^2  \label{eq:stab_V1} \\
			\mathcal{V}_2 & = \frac{1}{2}\sum_{j\in\mathcal{N}}M_j(\omega_j-\omega_j^*)^2 \label{eq:stab_V2} \\
			\mathcal{V}_3 &  = \sum_{(i,j)\in\mathcal{E}}\int_{\theta^*_{ij}}^{\theta_{ij}}Y_{ij}(\sin u - \sin \theta^*_{ij})du \label{eq:stab_V3}
		\end{align}
	\end{subequations}
	where $\theta_{ij}:=\theta_i-\theta_j$ is the difference in phase angle deviations along each line $(i,j)\in\mathcal{E}$ and $(\boldsymbol{\theta}^*,\boldsymbol{\omega}^*,s^*,\mu^*)$ are the equilibrium satisfying Assumption \ref{as:line_angles} defined by Definition \ref{def:eqlbrm2}.
	
	First we must show that $\mathcal{V}$ is nonnegative.
	Since $\mathcal{V}_1$ from \eqref{eq:stab_V1} and $\mathcal{V}_2$ from \eqref{eq:stab_V2} are sums of quadratic functions with nonnegative coefficients, $\mathcal{V}_1\geq 0$ and $\mathcal{V}_2\geq 0$ for any trajectory $\mu$ and $\boldsymbol{\omega}$, respectively.
	Assumption \ref{as:line_angles} makes each of the integrals in \eqref{eq:stab_V3} non-negative in the neighborhood of $\theta^*_{ij}$ and zero only when $\theta_{ij}=\theta^*_{ij}$.
	This results in $\mathcal{V}_3\geq 0$ for any trajectory $\boldsymbol{\theta}$ within the described neighborhood.
	Therefore, the total energy function from \eqref{eq:stab_V} $\mathcal{V}\geq 0$ for any trajectory $(\mu,\boldsymbol{\omega},\boldsymbol{\theta})$ in a neighborhood of the equilibrium point under consideration.
	
	Second we must show that $\frac{d\mathcal{V}}{dt}$ is nonpositive.
	From Lemma \ref{thm:dV1dt} we have:
	\begin{align}
		\frac{d\mathcal{V}_1}{dt} = & \sum_{j\in\mathcal{N}}(a_j\mu-a_j\mu^*)\left(d_j(\omega_j,\mu) - d_j(\omega_j^*,\mu^*) \right) \nonumber \\
		& -(\mu-\mu^*)\left((g')^{-1}(\mu)-(g')^{-1}(\mu^*)\right)  \label{eq:stab_dV1dt}
	\end{align}
	and from Lemma \ref{thm:dV23dt} we have:
	\begin{align}
		\frac{d\mathcal{V}_2}{dt}+\frac{d\mathcal{V}_3}{dt} = & \sum_{j\in\mathcal{N}}-(\omega_j-\omega_j^*)\left(d_j(\omega_j,\mu) - d_j(\omega_j^*,\mu^*) \right) \nonumber \\
		& +\sum_{j\in\mathcal{N}}-\left(D_j\fixes{+\frac{1}{R_j}}\right)(\omega_j-\omega_j^*)^2. \label{eq:stab_dV23dt}
	\end{align}
	Adding them together we get:
	\begin{subequations}
		\begin{align}
			\frac{d\mathcal{V}}{dt} = & \frac{d\mathcal{V}_1}{dt}+\frac{d\mathcal{V}_2}{dt}+\frac{d\mathcal{V}_3}{dt} \\
			= & \sum_{j\in\mathcal{N}}-((\omega_j-a_j\mu)-(\omega_j^*-a_j\mu^*))\left(d_j(\omega_j,\mu) - d_j(\omega_j^*,\mu^*) \right) \nonumber \\
			& -(\mu-\mu^*)\left((g')^{-1}(\mu)-(g')^{-1}(\mu^*)\right) \nonumber \\
			& + \sum_{j\in\mathcal{N}}-\left(D_j\fixes{+\frac{1}{R_j}}\right)(\omega_j-\omega_j^*)^2 \label{eq:lyapunov_deriv} \\
			\leq & 0
		\end{align}
	\end{subequations}
	\fixes{The second equality comes from combining terms associated with $d_j$.
	The inequality comes from analyzing each term individually.
	In the first summation, if $d_j(\omega_j,\mu)\in(\underline{d}_j,\overline{d}_j)$ then the $j$th summand is strictly less than zero when $(\omega_j-a_j\mu)\neq(\omega^*_j-a_j\mu^*)$ and is equal to zero when $(\omega_j-a_j\mu)=(\omega^*_j-a_j\mu^*)$  due to the strict convexity in Assumption \ref{as:convexity} with \eqref{eq:control_dj} making $d_j(\omega_j,\mu)$ an increasing function of $(\omega_j-a_j\mu)$.
	However, if $d_j(\omega_j,\mu)\in\{\underline{d}_j,\overline{d}_j\}$ then the summand is not necessarily strictly less than zero when $(\omega_j-a_j\mu)\neq(\omega^*_j-a_j\mu^*)$ since it could be that $d_j(\omega_j,\mu)=d_j(\omega^*_j,\mu^*)\in\{\underline{d}_j,\overline{d}_j\}$ which in that case the summand is equal to zero.
	The second term is strictly less than zero when $\mu\neq\mu^*$ and is equal to zero when $\mu=\mu^*$ due to the strict convexity in Assumption \ref{as:convexity} making $(g')^{-1}(\mu)$ an increasing function of $\mu$.
	In the final summation, the $j$th summand is less than zero when $\omega_j\neq\omega^*_j$ and equal to zero when $\omega_j=\omega^*_j$ since it is a quadratic function multiplied by a negative constant.
	In fact, because of the strict monotonicity of the second term with $\mu$ and the $j$th summand in the last summation with $\omega_j$, only at the equilibrium (without specifying $\boldsymbol{\theta}^*$) does $\frac{d\mathcal{V}}{dt}=0$.
	As consequence of Theorem \ref{thm:optimality}, the control laws satisfy KKT condition \eqref{eq:gfc_kkt_stat3} at equilibrium.
	Therefore, for each bus $j$, $\omega^*_j$ must be equal to a single global value $\omega^*$ in order for \eqref{eq:gfc_kkt_stat3} to be satisfied at equilibrium.
	Since the $d_j(\omega_j,\mu)$ defined in \eqref{eq:control_dj} is a function of $\omega_j$ and $\mu$, then $d_j\rightarrow d_j^*$ as $\omega_j\rightarrow \omega^*$ and $\mu\rightarrow \mu^*$.
	Since the excess computational power $s$ defined in \eqref{eq:s_of_djs} is a linear function of each $d_j$, then $s\rightarrow s^*$ as $d_j\rightarrow d_j^*:\forall j\in\mathcal{D}$.
	Since the power injection is $P_j$ as defined in \eqref{eq:pj_def} is a function of $\omega_j$ and $d_j$, then $P_j\rightarrow P^*_j$ as $\omega_j\rightarrow\omega^*_j$ and $d_j\rightarrow d_j^*$.}
	This shows that the equilibrium point under consideration $(\boldsymbol{\omega}^*,\mathbf{P}^*,\mathbf{d}^*,s^*,\mu^*)$ is asymptotically stable.
	This completes the Lyapunov method for asymptotic stability.
\end{proof}

\begin{IEEEbiography}[{\includegraphics[width=1in,height=1.25in,clip,keepaspectratio]{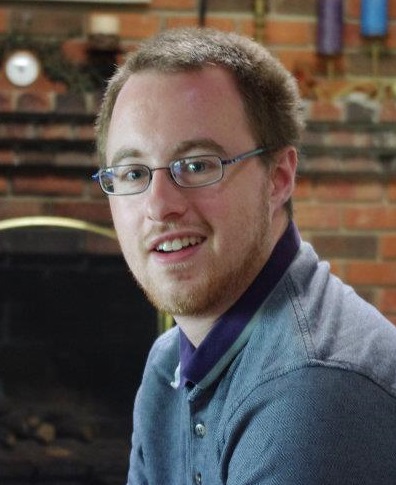}}]{Joshua Comden}
	is currently a Ph.D. Candidate at Stony Brook University studying Operations Research in the Department of Applied Mathematics and Statistics and was awarded the STRIDE Fellowship from 2017 to 2018.
	He received his M.S. in Applied Mathematics and Statistics in 2015 from Stony Brook University and his B.E. in Chemical Engineering from University of Delaware in 2009.
	His research interests are in the design and analysis of demand response programs and continual learning algorithms.
\end{IEEEbiography}

\begin{IEEEbiography}[{\includegraphics[width=1in,height=1.25in,clip,keepaspectratio]{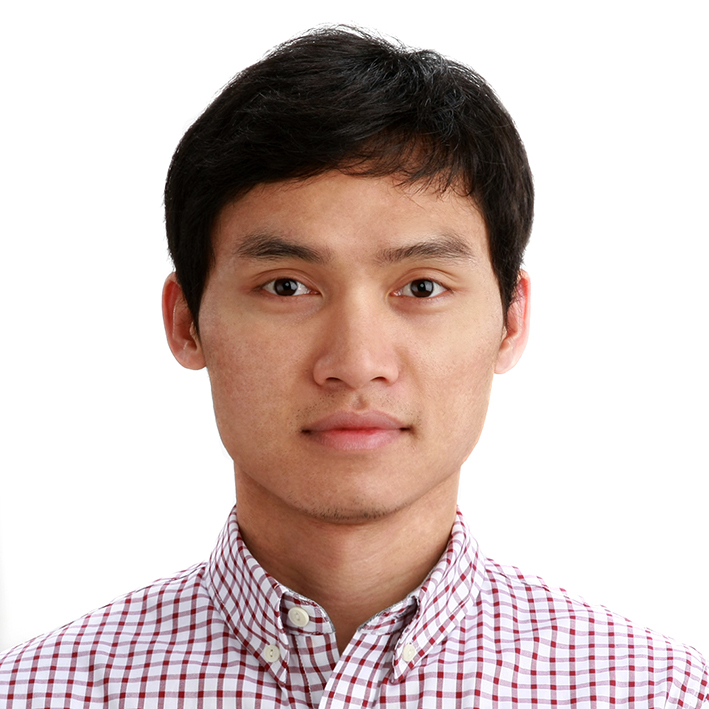}}]{Tan N. Le}
	is a Computer Science Ph.D. candidate at Stony Brook University (NY, USA) \& SUNY Korea.
	He completed his M.E. of Information Telecommunication at Soongsil University in 2011 and B.E. of Information Technology (2008) at PTITHCM, Vietnam.
	He got the ITCCP fellowship from 2013 to 2015.
	His current research interests are demand response and resource allocation for big data systems. 
\end{IEEEbiography}

\begin{IEEEbiography}[{\includegraphics[width=1in,height=1.25in,clip,keepaspectratio]{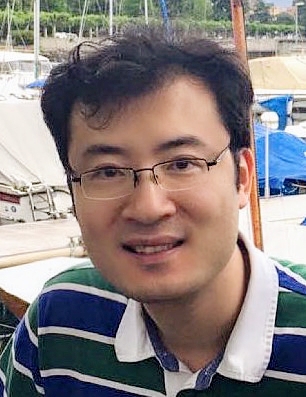}}]{Yue Zhao}
	(S’06, M’11) is an Assistant Professor of Electrical and Computer Engineering at Stony Brook University. He received the B.E. degree in Electronic Engineering from Tsinghua University, Beijing, China in 2006, and the M.S. and Ph.D. degrees in Electrical Engineering from the University of California, Los Angeles (UCLA), Los Angeles in 2007 and 2011, respectively. His current research interests include statistical signal processing, machine learning, optimization, game theory, and their applications in smart grid, renewable energy integration, and infrastructure networks. 
\end{IEEEbiography}

\begin{IEEEbiography}[{\includegraphics[width=1in,height=1.25in,clip,keepaspectratio]{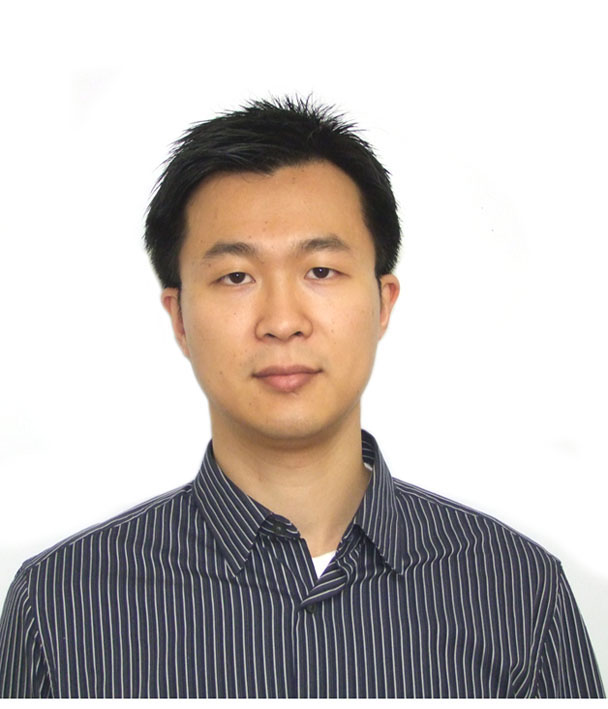}}]{Bong Jun Choi}
	received his B.Sc. and M.Sc. degrees from Yonsei University, Korea, both in electrical and electronics engineering, and the Ph.D. degree from University of Waterloo, Canada, in electrical and computer engineering.
	He is currently an assistant professor at the School of Computer Science and Engineering \& School of Electronic Engineering, Soongsil University, Seoul, Korea.
	His current research focuses on energy efficient networks, distributed mobile wireless networks, smart grid communications, and network security.
\end{IEEEbiography}

\begin{IEEEbiography}[{\includegraphics[width=1in,height=1.25in,clip,keepaspectratio]{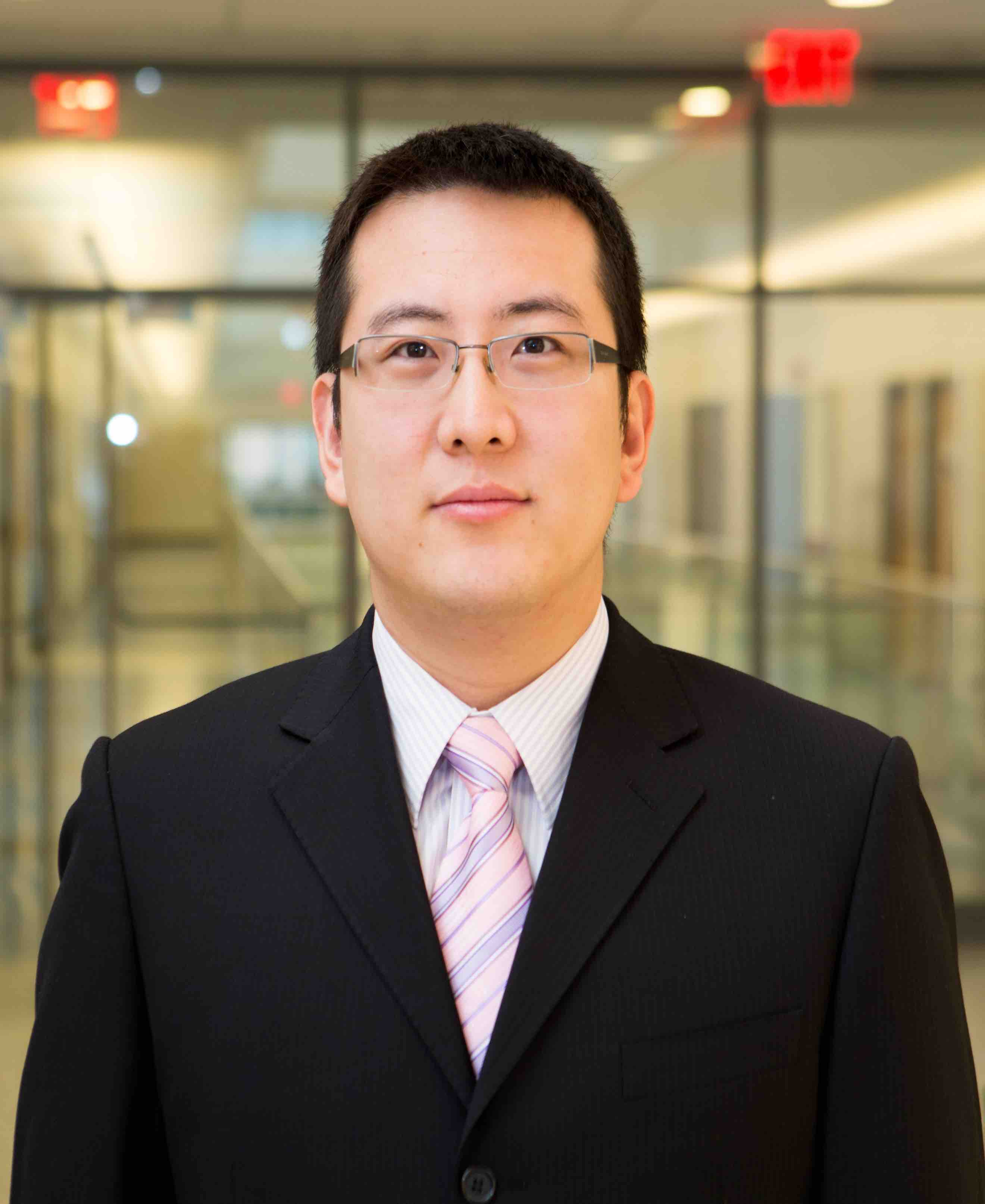}}]{Zhenhua Liu}
	(S08, M13) is an Assistant Professor in the Department of Applied Mathematics and Statistics and Computer Science at Stony Brook University, Stony Brook, NY, USA.
	He received a B.E. degree in Measurement and control in 2006, an M.S. degree in Computer Science in 2009, both from Tsinghua University, and a Ph.D. degree in Computer Science at the California Institute of Technology, Pasadena, CA, USA, in 2014.
	His current research lies in the intersection of energy and big data systems.
\end{IEEEbiography}

\end{document}